\let\frak\mathfrak
\def\>{\relax\ifmmode\mskip.666667\thinmuskip\relax\else\kern.111111em\fi}
\def\<{\relax\ifmmode\mskip-.333333\thinmuskip\relax\else\kern-.0555556em\fi}
\def\vsk#1>{\vskip#1\baselineskip}
\def\vv#1>{\vadjust{\vsk#1>}\ignorespaces}
\def\vvn#1>{\vadjust{\nobreak\vsk#1>\nobreak}\ignorespaces}
  \let\ssize\scriptstyle
\let\sssize\scriptscriptstyle
\let\Medskip\medskip
\def\medskip{\par\Medskip}
\let\Bigskip\bigskip
\def\bigskip{\par\Bigskip}
\let\Maketitle\maketitle
\def\maketitle{\Maketitle\thispagestyle{empty}\let\maketitle\empty}
\newtheorem{thm}{Theorem}[section]
\newtheorem{cor}[thm]{Corollary}
\newtheorem{lem}[thm]{Lemma}
\theoremstyle{definition}                                  
\numberwithin{equation}{section}
\theoremstyle{definition}
\let\mc\mathcal
\let\nc\newcommand
\let\ka\kappa
\let\la\lambda
\let\phi\varphi
\let\Om\Omega
\let\der\partial
\let\ox\otimes
\let\geq\geqslant
\let\leq\leqslant
\let\on\operatorname
\let\bi\bibitem
\let\bs\boldsymbol
\def\C{{\mathbb C}}
\def\Z{{\mathbb Z}}
\def\F{{\mathbb F}}
\def\+#1{^{\{#1\}}}
\def\beq{\begin{equation}}
\def\eeq{\end{equation}}
\def\be{\begin{equation*}}
\def\ee{\end{equation*}}
\nc{\bea}{\begin{eqnarray*}}
\nc{\eea}{\end{eqnarray*}}
\nc{\bean}{\begin{eqnarray}}
\nc{\eean}{\end{eqnarray}}
\nc{\Ref}[1]{{\rm(\ref{#1})}}
\def\g{{\mathfrak g}}
\let\ga\gamma
\let\Ga\Gamma
\nc{\Il}{{\mc I_{\bs\la}}}
\nc{\bla}{{\bs\la}}
\nc{\Fla}{\F_\bla}
\nc{\tfl}{{T^*\Fla}}
\nc{\GL}{{GL_n(\C)}}
\nc{\GLC}{{GL_n(\C)\times\C^*}}
\let\sd s 
\def\ddk_#1{\kk_{#1}\<\>\frac\der{\der\<\>\kk_{#1}}}
\def\bul{\mathbin{\raise.2ex\hbox{$\sssize\bullet$}}}
\def\intt{\mathchoice
{\mathop{\raise.2ex\rlap{$\,\,\ssize\backslash$}{\intop}}\nolimits}
{\mathop{\raise.3ex\rlap{$\,\sssize\backslash$}{\intop}}\nolimits}
{\mathop{\raise.1ex\rlap{$\sssize\>\backslash$}{\intop}}\nolimits}
{\mathop{\rlap{$\sssize\<\>\backslash$}{\intop}}\nolimits}}
\let\kk q 
\let\cc c
\let\Ko K
\def\GZ/{Gelfand-Zetlin}
\def\KZ/{{\slshape KZ\/}}
\def\qKZ/{{\slshape qKZ\/}}
\def\XXX/{{\slshape XXX\/}}
\nc{\A}{{\mc C}}
\def\FFF{{\mathbb F}}
\def\sll{{\frak{sl}}}
\begin{document}

\hrule width0pt
\vsk->

\hrule width0pt
\vsk->

\title[Hyperelliptic integrals modulo $p$ and Cartier-Manin matrices]
{Hyperelliptic integrals modulo $p$\\  and Cartier-Manin matrices}

\author[Alexander Varchenko]
{ Alexander Varchenko}

\maketitle

\begin{center}
{\it Department of Mathematics, University
of North Carolina at Chapel Hill\\ Chapel Hill, NC 27599-3250, USA\/}

\vsk.5>
{\it Faculty of Mathematics and Mechanics, Lomonosov Moscow State
University\\ Leninskiye Gory 1, 119991 Moscow GSP-1, Russia\/}
\end{center}

{\let\thefootnote\relax
\footnotetext{\vsk-.8>
\noindent
{\sl E\>-mail}:\enspace anv@email.unc.edu\>,
supported in part by NSF grant DMS-1665239}}

\vsk>
{\leftskip3pc \rightskip\leftskip \parindent0pt \Small
{\it Key words\/}: KZ equations, hyperelliptic integrals, Cartier-Manin matrix, reduction to \\ 
\phantom{aaaaaaaaaa} characteristic $p$
\vsk.6>
{\it 2010 Mathematics Subject Classification\/}: 13A35 (33C60, 32G20)
\par}

\begin{abstract}

The hypergeometric solutions of the KZ equations were constructed almost 30 years ago. The polynomial
solutions of the KZ equations over the finite field $\F_p$  with a prime number $p$ of elements were constructed 
recently.  In this paper we consider the example of the KZ equations whose hypergeometric solutions are given by
hyperelliptic integrals of genus $g$. It is known that in this case the 
total $2g$-dimensional space of holomorphic
solutions is given by
the hyperelliptic integrals. We show that the recent construction of the polynomial solutions over the field $\F_p$
in this case gives only a $g$-dimensional space of solutions, that is,
a  "half" of what the complex analytic construction gives.
We also show that all the constructed polynomial solutions over the field $\F_p$ can be obtained by reduction modulo $p$ of a single
distinguished hypergeometric solution. The corresponding formulas involve the entries
of the Cartier-Manin matrix of the hyperelliptic curve.

That situation is analogous to the example of the elliptic integral considered in the classical Y.I.\,Manin's paper in 1961.

\end{abstract}

{\small\tableofcontents\par}

\setcounter{footnote}{0}
\renewcommand{\thefootnote}{\arabic{footnote}}

\section{Introduction}

The hypergeometric solutions of the KZ equations were constructed almost 30 years ago,
see \cite{SV1, SV2}. The polynomial
solutions of the KZ equations over the finite field $\F_p$  
with a prime number $p$ of elements were constructed 
recently in \cite{SV3}.  In this paper we consider the example of the KZ equations whose hypergeometric solutions are given by
hyperelliptic integrals of genus $g$. It is known that in this case the total $2g$-dimensional space of
holomorphic solutions is given by
the hyperelliptic integrals. We show that the recent construction of the polynomial solutions over the field $\F_p$
in this case gives only a $g$-dimensional space of solutions, that is,
a  "half" of what the complex analytic construction gives.
We also show that all the constructed polynomial solutions over the field $\F_p$ can be obtained by reduction modulo $p$ of a single
distinguished hypergeometric solution. The corresponding formulas involve the entries
of the Cartier-Manin matrix of the hyperelliptic curve.

That situation is analogous to the example of the elliptic integral considered in the classical Y.I.\,Manin's paper in 1961.

\smallskip
The paper is organized as follows. In Section \ref{sec KZ} we describe the KZ equations, and construct
for them  two types of solutions: over $\C$ and over $\F_p$.
In Section \ref{sec I^m(z)} we show that the solutions, constructed over $\F_p$, form a module,
denoted by $\mc M_{g,p}$,  of rank $g$. 
In Section \ref{sec binom} useful formulas on binomial coefficients are collected.
In Section \ref{sec J^m(z)} a new basis of the module $\mc M_{g,p}$ is constructed.
In Section \ref{sec CM} the Cartier-Manin matrix of a hyperelliptic curve is defined.
In Section \ref{sec comp} we introduce a distinguished holomorphic solution of the KZ equations,
reduce its Taylor expansion coefficients modulo $p$ and express this reduction in terms of 
the polynomial solutions over $\F_p$ and entries of the Cartier-Manin matrix.

\smallskip
The author thanks R.\,Arnold, F.\,Beukers, N.\,Katz, V.\,Schechtman, J.\,Stienstra, Y. Zarhin, and 
W.\,Zudilin
 for useful discussions. The author thanks MPI in Bonn for hospitality in
May-June 2018 when this work had been finished.

\section{KZ equations}
\label{sec KZ}

\subsection{Description of equations}   Let $\g$ be a simple Lie algebra  over the field $\C$,
$\Om \in\g^{\ox 2} $ the Casimir element corresponding to an invariant scalar product on $\g$, \
$V_{1}, \dots, V_{n}$ finite-dimensional irreducible $\g$-modules.

The system of KZ equations with parameter $\ka\in \C^\times$
on a $\ox_{i=1}^nV_{i}$-valued function 
$I(z_1,\dots,z_n)$ is the system of the differential equations 
\bean
\label{KZg}
 \frac{\der I}{\der z_i} = 
\frac 1\ka \sum_{j\ne i}\frac{\Om^{(i,j)}}{z_i-z_j} I,\qquad i=1,\dots,n,
\eean
where $\Om^{(i,j)}$ is the Casimir element acting in the $i$-th and $j$-th factors,
see \cite{KZ, EFK}.  The KZ differential equations commute with the action of $\g$ on 
$\ox_{i=1}^nV_{i}$, in particular, they preserve the subspaces of singular vectors
of given weight.

In \cite{SV1, SV2} the KZ equations restricted to the subspace of singular vectors of given weight were identified with 
a suitable Gauss-Manin differential equations and the corresponding solutions of the KZ equations were
presented as multidimensional hypergeometric integrals. 

\smallskip

Let $p$ be a prime number and $\F_p$ the field with $p$ elements. Let $\g^p$ be the same Lie algebra considered over 
 $\F_p$. 
Let $V_{1}^p, \dots, V_{n}^p$ be the $\g^p$-modules which are reductions modulo $p$ of
$V_{1}, \dots, V_{n}$, respectively.
If $\ka$ is an integer and $p$ large enough with respect to $\ka$, then one can look for solutions
$I(z_1,\dots,z_n)$ of the KZ equations in $\ox_{i=1}^nV_{i}^p\ox \F_p[z_1,\dots,z_n]$.
Such solutions were constructed in \cite{SV3}.

In this paper we address two questions: 
\begin{enumerate}
\item[A.]
What is the number of independent solutions constructed in \cite{SV3} for given $\F_p$?
\item[B.]
How are those solutions related to the solutions over $\C$,  that 
are given by hypergeometric integrals?
\end{enumerate}
We answer these question in the example in which the hypergeometric solutions are presented by hyperelliptic
integrals. 

\medskip
The object of our study is the following systems of 
equations. For a positive integer $g$ and
 $z=(z_1,\dots, z_{2g+1})\in \C^{2g+1}$, we study the
 column vectors  $I(z)=(I_1(z)$, \dots, $I_{2g+1}(z))$ satisfying  the  system of
 differential and algebraic linear equations:
\bean
\label{KZ}
\phantom{aaa}
 \frac{\partial I}{\partial z_i} \ = \
   {\frac 12} \sum_{j \ne i}
   \frac{\Omega^{(i,j)}}{z_i - z_j}  I ,
\quad i = 1, \dots , {2g+1},
\qquad
I_1(z)+\dots+I_{2g+1}(z)=0,
\eean
where
\[ \Omega^{(i,j)} \ = \ \begin{pmatrix}
             & \vdots^i &  & \vdots^j &  \\
        {\scriptstyle i} \cdots & {-1 } & \cdots &
                 1 & \cdots \\
                   & \vdots &  & \vdots &   \\
        {\scriptstyle j} \cdots & 1 & \cdots & {-1 }&
                 \cdots \\
                   & \vdots &  & \vdots &
                   \end{pmatrix} ,
                    \]
and all other  entries equal  zero.

The system of equations \Ref{KZ} is the system of the KZ differential equations with parameter $\ka=2$ associated with the Lie algebra $\sll_2$ and the subspace of singular vectors of weight $2g-1$ of the tensor power 
$(\C^2)^{\ox {(2g+1)}}$ of two-dimensional irreducible $\sll_2$-modules, up to a gauge transformation, see 
this example in  \cite[Section 1.1]{V2}.

\subsection{Solutions of \Ref{KZ} over $\C$} 
Consider the {\it master function}
\bean
\label{mast f}
\Phi(t,z_1,\dots,z_{2g+1}) = \prod_{a=1}^{2g+1}(t-z_a)^{-1/2}
\eean
and  the ${2g+1}$-vector  of hyperelliptic integrals
\bean
\label{Iga}
I^{(\ga)} (z)=(I_1(z),\dots,I_{2g+1}(z)),
\eean
 where
\bean
\label{s}
I_j=\int  \Phi(t,z_1,\dots,z_{2g+1}) \frac {dt}{t-z_j},\qquad j=1,\dots,{2g+1}.
\eean
The integrals are over an element $\ga$ of the first homology group
 $\ga$ of the hyperelliptic curve with equation
\bea
y^2 = (t-z_1)\dots (t-z_{2g+1}).
\eea
Starting from such $\ga$,  chosen for given $\{z_1,\dots,z_{2g+1}\}$, the vector $I^{(\ga)}(z)$ can be analytically continued as a multivalued holomorphic function of $z$ to the complement in $\C^n$ to the union of the
diagonal  hyperplanes $z_i=z_j$.

\begin{thm}
\label{thm1.1}

 The vector $I^{(\ga)}(z)$ satisfies  the KZ equations  \Ref{KZ}.
\end{thm}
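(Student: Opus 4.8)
The plan is to verify \eqref{KZ} directly by differentiating the integrals \eqref{s} under the integral sign and reducing every resulting integrand to a combination of the $I_j$ via partial fractions, using crucially that the integration cycle $\gamma$ is \emph{closed}, so that periods of exact $1$-forms vanish. It is convenient to regard $\Phi=1/y$ as a meromorphic function on the hyperelliptic curve $X\colon y^2=\prod_{a=1}^{2g+1}(t-z_a)$, so that each integrand $\Phi\,dt/(t-z_j)=dt/\bigl(y(t-z_j)\bigr)$ is a genuine meromorphic $1$-form and $\gamma\in H_1(X)$ is an honest closed cycle. The two elementary identities on which everything rests are
\[
\frac{\partial\Phi}{\partial z_i}=\frac{\Phi}{2(t-z_i)},\qquad
\frac{\partial\Phi}{\partial t}=-\frac12\,\Phi\sum_{a}\frac1{t-z_a}.
\]
I would first record the shape of the right-hand side of \eqref{KZ}: since $\Omega^{(i,j)}$ has $-1$ in the $(i,i),(j,j)$ entries and $+1$ in the $(i,j),(j,i)$ entries, its $k$-th component contributes only for $k\in\{i,j\}$, so the $k$-th component of $\tfrac12\sum_{j\ne i}\Omega^{(i,j)}I/(z_i-z_j)$ equals $\tfrac12(I_i-I_k)/(z_i-z_k)$ when $k\ne i$, and $\tfrac12\sum_{j\ne i}(I_j-I_i)/(z_i-z_j)$ when $k=i$.

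For the off-diagonal components ($k\ne i$), differentiating $I_k$ in $z_i$ and using the first identity gives $\partial_{z_i}I_k=\tfrac12\int \Phi\,dt/\bigl((t-z_i)(t-z_k)\bigr)$; the partial fraction $1/\bigl((t-z_i)(t-z_k)\bigr)=(z_i-z_k)^{-1}\bigl(1/(t-z_i)-1/(t-z_k)\bigr)$ turns this into $\tfrac12(I_i-I_k)/(z_i-z_k)$, exactly the required component. The diagonal component is the only one that needs the closedness of $\gamma$: differentiating $I_i$ produces a double pole,
\[
\frac{\partial I_i}{\partial z_i}=\frac32\int\frac{\Phi}{(t-z_i)^2}\,dt,
\]
which is not by itself of the form $\sum_j I_j$. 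To remove it I would integrate the total derivative $\partial_t\bigl(\Phi/(t-z_i)\bigr)$ over $\gamma$; since this $1$-form is exact its period vanishes, and expanding it with the second identity yields
\[
0=-\frac32\int\frac{\Phi}{(t-z_i)^2}\,dt-\frac12\sum_{a\ne i}\int\frac{\Phi}{(t-z_a)(t-z_i)}\,dt.
\]
Applying the same partial fraction as above converts this into $\partial_{z_i}I_i=\tfrac12\sum_{a\ne i}(I_a-I_i)/(z_i-z_a)$, matching the diagonal component.

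Finally, the algebraic relation $I_1+\dots+I_{2g+1}=0$ follows from the second identity: $\sum_j\Phi/(t-z_j)=-2\,\partial_t\Phi$, so $\sum_j I_j=-2\int d\Phi=0$, again because $\gamma$ is closed. The one point genuinely requiring care is precisely this vanishing of periods of exact forms, together with the legitimacy of differentiating under the integral sign. Both are justified once one works on the curve $X$, where $\Phi=1/y$ is a single-valued meromorphic function and $\gamma$ is a fixed closed cycle, so that the fundamental theorem of calculus applies along $\gamma$ while the $z$-dependence of the integrand is smooth away from the diagonals $z_i=z_j$; I expect the bookkeeping of this analytic justification, rather than the algebra, to be the main obstacle.
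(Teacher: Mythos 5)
Your proof is correct and complete. It is worth noting that the paper does not actually prove Theorem \ref{thm1.1} at all: it records it as a classical statement and points to \cite{SV1,SV2} for the general theory and to \cite[Section 1.1]{V2}, where this example is treated, so your direct verification is a genuinely different, self-contained route. Your component bookkeeping for $\Omega^{(i,j)}$, the differentiation under the integral sign, and the partial-fraction reductions all check out, and the one step carrying real content---trading the double-pole integral $\tfrac32\int_\gamma \Phi\,(t-z_i)^{-2}\,dt$ for single-pole integrals by integrating the exact form $d\bigl(\Phi/(t-z_i)\bigr)$ over the closed cycle $\gamma$---is precisely the one-dimensional instance of the Gauss--Manin mechanism that \cite{SV1,SV2} axiomatize: the $z_i$-derivative is computed in de Rham cohomology, where one may discard exact forms. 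Your framing on the curve $X$, with $\Phi=1/y$ single-valued, also disposes of the analytic worries: all the $1$-forms involved are meromorphic on $X$ with poles only at the branch points and at infinity (and with vanishing residues there), a representative loop for $\gamma$ avoids these points, so differentiating under the integral sign and $\int_\gamma df=0$ are immediate; the same identity $\sum_j \Phi/(t-z_j)=-2\,\partial_t\Phi$ gives the algebraic relation $\sum_j I_j=0$ for free. What your argument buys is an elementary, fully explicit proof for this particular system; what the paper's citation buys is the general machinery, which covers multidimensional hypergeometric integrals and arbitrary weight subspaces, where such hand computations become unwieldy.
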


Theorem \ref{thm1.1} is a classical statement probably known in the 19th century. 
Much more general algebraic and differential equations satisfied by analogous multidimensional hypergeometric integrals were considered in \cite{SV1, SV2}.  Theorem \ref{thm1.1} is discussed as an example in  \cite[Section 1.1]{V2}.

\begin{thm} [{\cite[Formula (1.3)]{V1}}]
\label{thm dim}

All solutions of the KZ equations \Ref{KZ} have this form. 
Namely, the complex vector space of solutions of the form \Ref{Iga} is $2g$-dimensional.

\end{thm}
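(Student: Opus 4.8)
The plan is to establish Theorem \ref{thm dim} by identifying the solution space of \Ref{KZ} with a de Rham–type cohomology group of the hyperelliptic curve, and then computing its dimension from the genus. First I would observe that the KZ system \Ref{KZ}, being a rank-$(2g+1)$ first-order linear integrable system on the complement of the diagonals, has a local solution space of dimension at most $2g+1$; the extra algebraic constraint $I_1+\dots+I_{2g+1}=0$ cuts this down, so the real content is to show the genuine solution space has dimension exactly $2g$. By Theorem \ref{thm1.1} every homology cycle $\ga\in H_1$ of the curve $y^2=\prod_a(t-z_a)$ produces a solution $I^{(\ga)}(z)$, and since the hyperelliptic curve of genus $g$ has $\dim H_1 = 2g$, the map $\ga\mapsto I^{(\ga)}$ furnishes a $2g$-dimensional family. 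So the task splits into two parts: an upper bound (no solution outside this family) and a lower bound (the $2g$ cycles give linearly independent solutions).

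For the lower bound I would argue that the period map $\ga\mapsto I^{(\ga)}(z)$ is injective on $H_1$. The natural way is to recognize the integrands $\Phi\,\frac{dt}{t-z_j}$ as representatives of the relevant twisted cohomology classes: these are precisely the meromorphic differentials on the curve whose periods compute the solution, and the pairing between $H_1$ and this space of differentials is the nondegenerate period pairing. Concretely, one checks that a vanishing linear combination of the component integrals would force a cycle to pair trivially with a spanning set of holomorphic (or second-kind) differentials, forcing $\ga=0$ in $H_1$; nondegeneracy of the intersection/period pairing then gives injectivity, hence the $2g$ solutions are independent.

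The upper bound is the main obstacle. The cleanest approach is to identify the KZ connection \Ref{KZ} with the Gauss–Manin connection on the rank-$2g$ local system $R^1\pi_*\C$ associated to the family of hyperelliptic curves over the configuration space of the branch points $z_1,\dots,z_{2g+1}$ — this is exactly the identification with a Gauss–Manin system referenced after \Ref{KZg} and worked out in \cite{SV1, SV2}. Under this identification the flat sections of the KZ connection correspond bijectively to locally constant sections of $R^1\pi_*\C$, whose fiber dimension is $2g$; since the solution space cannot exceed the rank of the flat bundle, the bound follows. The delicate point is verifying that the algebraic constraint together with the $(2g+1)$ differential equations cuts the a priori $(2g+1)$-dimensional space down to the $2g$-dimensional Gauss–Manin bundle, i.e. that the gauge transformation mentioned after \Ref{KZ} genuinely matches the $\sll_2$ weight-$(2g-1)$ singular-vector space (which has dimension $2g$) with $H^1$ of the curve. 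I would handle this by computing $\dim\Sing$ of the weight space directly — the number of standard tableaux / the relevant Catalan-type count gives $2g$ — and cross-checking against the Betti number $2g$, so the two bounds coincide and force equality.

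Since Theorem \ref{thm dim} is quoted from \cite[Formula (1.3)]{V1}, in the write-up I would either cite that reference for the dimension count and supply only the period-injectivity argument for the lower bound, or reconstruct the Gauss–Manin identification in the special hyperelliptic case where the cohomology is completely explicit. I expect the matching of the singular-vector space with $H^1$ to be the step requiring the most care, while the injectivity of periods is standard via nondegeneracy of the period pairing.
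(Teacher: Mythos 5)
Your strategy is sound, but it is a genuinely different route from the paper's. The paper's entire proof is a citation: linear independence of the $2g$ period solutions is read off from the determinant formula \cite[Formula (1.3)]{V1}, which evaluates the determinant of the period matrix $\big(\int_{\ga_i}\Phi\,\frac{dt}{t-z_j}\big)$ over a basis of cycles as an explicit nonzero product, and the (standard) upper bound is left implicit. You replace the explicit determinant by soft de Rham theory: the forms $\om_j=\Phi\,\frac{dt}{t-z_j}=\frac{dt}{y\,(t-z_j)}$ are of the second kind, they span $H^1$ of the hyperelliptic curve, and nondegeneracy of the period pairing gives injectivity of $\ga\mapsto I^{(\ga)}$ on $H_1$. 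This buys a self-contained geometric argument in place of the appeal to \cite{V1}; what the determinant formula buys is an explicit, quantitative nondegeneracy statement requiring no cohomological apparatus. Two caveats on your version. First, the one substantive lemma you assert without proof is the spanning claim: the $2g+1$ classes $[\om_j]$ must be shown to span the $2g$-dimensional de Rham cohomology, the unique relation being $\sum_j\om_j=-2\,d(1/y)$ (the cohomological counterpart of the constraint $\sum_j I_j=0$). This is true and standard (reduce each $\om_j$ to the basis $t^k\,dt/y$, $0\le k\le 2g-1$, via the exact forms $d\big(y/(t-z_j)\big)$, and check that the exact forms $d\big(B(t)/y\big)$ yield no further relations), but it is precisely where the content of the lower bound sits, so it cannot be waved through. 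Second, your Gauss--Manin identification for the upper bound is unnecessary machinery: every solution of \Ref{KZ} takes values in the hyperplane $\sum_j v_j=0$ and is determined by its value at a single point, so the solution space has dimension at most $2g$ outright; combined with your lower bound this already forces every solution to be of the form \Ref{Iga}, with no need to match the $\sll_2$ singular-vector space against $R^1\pi_*\C$.
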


This theorem follows from the determinant formula for multidimensional hypergeometric integrals  in    \cite{V1}, in particular,
from \cite[Formula (1.3)]{V1}.

\subsection{Solutions of KZ equations \Ref{KZ} over $\F_p$} 
We  always assume that the prime number $p$ satisfies the inequality
\bean
p\geq 2g+1.
\eean
Define the {\it master polynomial}
\bean
\label{mast p}
\Phi_p(t,z_1,\dots,z_{2g+1}) = \prod_{a=1}^{2g+1}(t-z_a)^{(p-1)/2}  \ \ \in \ \F_p[t,z]
\eean
and  the ${2g+1}$-vector  of polynomials
\bean
\label{F v}
P(z)=(P_1(t,z),\dots,P_{2g+1}(t,z)),\qquad P_j(t,z)=  \frac {1}{t-z_j}\Phi_p(t,z_1,\dots,z_{2g+1}).
\eean
Consider the Taylor expansion
\bean
\label{te}
P(t,z)= {\sum}_{i=0}^{(p-1)/2 + gp -g-1}  P^i(z) t^i, \qquad P^i(z)=(P^i_1(z),\dots,P^i_{2g+1}(z)),
\eean
with $P^i_j(z) \in\F_p[z]$.

\begin{thm}
[\cite{SV3}]
\label{thm SV}
For every positive integer $l$, the vector $P^{lp-1}(z)$ satisfies the KZ equations \Ref{KZ}.

\end{thm}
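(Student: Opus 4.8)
The plan is to prove Theorem \ref{thm SV} by a ``reduction modulo $p$'' philosophy: the complex integrals $I_j=\int\Phi\,dt/(t-z_j)$ satisfy the KZ equations \Ref{KZ} by Theorem \ref{thm1.1}, and the key point is that the differential identities that force this are \emph{polynomial} in nature, coming from the fact that the integrand is a total $t$-derivative plus a combination of the KZ right-hand sides. So I would first make explicit, over $\C$, the mechanism behind Theorem \ref{thm1.1}: differentiating $\Phi(t,z)=\prod_a(t-z_a)^{-1/2}$ in $z_i$ brings down a factor $\tfrac12\,(t-z_i)^{-1}$, and one checks that
\[
\frac{\der}{\der z_i}\Bigl(\frac{\Phi}{t-z_j}\Bigr)
-\frac12\sum_{k\ne i}\frac{\Omega^{(i,j)}_{\,\cdot}}{z_i-z_k}\Bigl(\frac{\Phi}{t-z_k}\Bigr)
=\frac{\der}{\der t}\bigl(\text{something}\cdot\Phi\bigr).
\]
Integrating over the cycle $\ga$ kills the exact term and yields the KZ system. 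The whole miracle is thus encoded in an \emph{algebraic identity of rational functions} (equivalently, of polynomials once cleared of denominators) relating $\der_{z_i}$ of the integrand, the KZ combination, and $\der_t$ of an explicit auxiliary expression.

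Next I would transfer this identity to characteristic $p$ by replacing every exponent $-1/2$ with $(p-1)/2$, i.e.\ replacing $\Phi$ by the master polynomial $\Phi_p=\prod_a(t-z_a)^{(p-1)/2}$ of \Ref{mast p}. The essential arithmetic fact is that in $\F_p$ the exponent $(p-1)/2$ plays the role of $-1/2$: since $2\cdot\tfrac{p-1}{2}=p-1\equiv -1\pmod p$, differentiating $(t-z_a)^{(p-1)/2}$ produces the factor $\tfrac{p-1}{2}(t-z_a)^{-1}$, and $\tfrac{p-1}{2}\equiv -\tfrac12\pmod p$. Hence the same formal manipulation that produced the exact-form identity over $\C$ reproduces, mutatis mutandis, an identity over $\F_p[t,z]$ of the shape
\[
\frac{\der}{\der z_i}P_j(t,z)
-\frac12\sum_{k\ne i}\frac{\Omega^{(i,j)}_{\,\cdot}}{z_i-z_k}\,P_k(t,z)
=\frac{\der}{\der t}\,Q_{ij}(t,z),
\]
for explicit polynomials $Q_{ij}\in\F_p[t,z]$ (here the $\Omega$-matrix entries $\pm1$ simply assemble the $P_k$'s as dictated by \Ref{KZ}). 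The role of the integral over a cycle is now replaced by the \emph{extraction of a single Taylor coefficient}: where complex analysis used $\oint \der_t(\cdots)=0$, the characteristic-$p$ argument uses that the coefficient of $t^{lp-1}$ in a $t$-derivative $\der_t Q$ vanishes. Indeed, the coefficient of $t^{lp-1}$ in $\der_t Q$ equals $lp$ times the coefficient of $t^{lp}$ in $Q$, and $lp\equiv0\pmod p$.

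Carrying this out, I would take the coefficient of $t^{lp-1}$ on both sides of the displayed polynomial identity. The right-hand side contributes $0$ by the divisibility observation. On the left-hand side, taking the $t^{lp-1}$ coefficient commutes with $\der/\der z_i$ and with multiplication by the $z$-only factors $1/(z_i-z_k)$, so it turns $P_j$ into $P^{lp-1}_j(z)$ as defined by the Taylor expansion \Ref{te}. This yields exactly
\[
\frac{\der}{\der z_i}P^{lp-1}_j(z)
=\frac12\sum_{k\ne i}\frac{\Omega^{(i,j)}_{\,\cdot}}{z_i-z_k}\,P^{lp-1}_k(z),
\]
which is the $j$-th component of the KZ system \Ref{KZ} applied to the vector $P^{lp-1}(z)$. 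The algebraic constraint $\sum_j P^{lp-1}_j=0$ should follow from the analogous relation $\sum_j P_j(t,z)=\der_t\Phi_p\cdot(\text{const})$ or from a direct partial-fractions identity $\sum_j 1/(t-z_j)=\der_t\log\prod_j(t-z_j)$, again by extracting the vanishing $t^{lp-1}$ coefficient.

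I expect the main obstacle to be the explicit construction and bookkeeping of the auxiliary polynomials $Q_{ij}$ and the verification that the derivative identity holds \emph{exactly} in $\F_p[t,z]$ rather than merely up to terms that happen to vanish after integration over $\C$. In the complex setting one is allowed to discard boundary terms freely; in the $\F_p$ setting one must ensure that the only terms dropped are genuine $t$-derivatives, so that the $t^{lp-1}$-coefficient extraction annihilates them. A secondary subtlety is confirming that no exponent in $\Phi_p$ or in the intermediate expressions reaches or exceeds $p$ in a way that would cause an unexpected vanishing or a Frobenius-type collapse; this is precisely where the standing hypothesis $p\ge 2g+1$ is used, guaranteeing that the exponents $(p-1)/2$ and the degrees appearing stay in the stable range where reduction modulo $p$ is compatible with the formal differential calculus.
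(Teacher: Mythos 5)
Your proposal is correct and is essentially the same argument as the one this theorem rests on: the paper itself gives no proof, deferring to \cite[Theorem 2.4]{SV3}, and the proof there is exactly your mechanism — an identity $\partial_{z_i}P_j - (\text{KZ combination}) = \partial_t Q_{ij}$ valid in $\F_p[t,z]$ (using $(p-1)/2 \equiv -1/2 \pmod p$), followed by extraction of the coefficient of $t^{lp-1}$, which annihilates $t$-derivatives because $lp\equiv 0 \pmod p$. The bookkeeping you deferred is in fact trivial: $Q_{ij}=-\delta_{ij}P_i$, since for $j\ne i$ the equation $\partial_{z_i}P_j=\tfrac12\,(P_i-P_j)/(z_i-z_j)$ is a pure partial-fraction identity needing no derivative correction, and the constraint $\sum_j P_j^{lp-1}=0$ follows from $\sum_j P_j=-2\,\partial_t\Phi_p$; also note that the hypothesis $p\ge 2g+1$ is not what makes this argument work (odd $p$ suffices here) — it is used elsewhere, e.g.\ for the nonvanishing and independence statements of Theorem \ref{thm inde}.
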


This statement is a particular case of \cite[Theorem 2.4]{SV3}. Cf. Theorem \ref{thm SV} with \cite{K}.

Theorem \ref{thm SV} gives exactly $g$ solutions $P^{p-1}(z),\dots, P^{gp-1}(z)$. We denote
\bea
I^m(z)= (I^m_1(z),\dots,I^m_{2g+1}(z)), 
\eea
where
\bean
\label{P I}
I^m(z) := P^{(g-m)p-1}(z), \qquad m=0,\dots,g-1.
\eean

\section{Linear independence of solutions $I^m(z)$}
\label{sec I^m(z)}

Denote $\F_p[z^p]:=\F_p[z_1^p,\dots,z_{2g+1}^p]$. 
The set of all solutions $I(z)\in \F_p[z]^{2g+1}$ of the KZ equations
\Ref{KZ} 
is a module over the ring
$\F_p[z^p]$ since equations \Ref{KZ} are linear and
$\frac{\der z_i^p}{\der z_j} =0$ in $\F_p[z]$ for all $i,j$.
Denote by
\bea
\mc M_{g,p}=\big\{ \sum_{m=0}^{g-1} c_m(z) I^m(z) \ |\ c_m(z)\in\F_p[z^p]\big\},
\eea
the $\F_p[z^p]$-module generated by $I^m(z)$, $m=0,\dots,g-1$.

\begin{thm}
\label{thm inde}
Let $p\geq 2g+1$. The solutions $I^{m}(z)$,  $m=0,\dots, g-1$,
are linear independent over the ring $\F_p[z^p]$,
that is, if \,$\sum_{m=0}^{g-1}   c_m(z)I^m(z)=0$ for some $c_m(z)\in\F_p[z^p]$, then
$c_m(z)=0$ for all $m$.

\end{thm}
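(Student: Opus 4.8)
The plan is to convert the statement into a purely algebraic independence assertion over a field of $p$-th powers, and then to reduce it to the nonvanishing of an explicit determinant of coefficients of $\Phi_p$.

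First I would observe that it suffices to prove the stronger fact that $I^0,\dots,I^{g-1}$ are linearly independent over the fraction field $\F_p(z_1^p,\dots,z_{2g+1}^p)$, since any $\F_p[z^p]$-linear relation is in particular a relation over this field. Because $\F_p(z^p)$ is exactly the field of $p$-th powers in $\F_p(z)$, I may write a hypothetical nontrivial relation, after clearing denominators, as $\sum_{m=0}^{g-1} b_m(z)^p\,I^m(z)=0$ with $b_m\in\F_p[z]$ not all zero. Setting $D(t,z)=\sum_{m=0}^{g-1} b_m(z)\,t^m$ and using $I^m_j=[t^{(g-m)p-1}]\,\Phi_p/(t-z_j)$ together with $D(t,z)^p=\sum_m b_m(z)^p t^{mp}$, the relation becomes, componentwise,
\[ [t^{gp-1}]\Big(D(t,z)^p\,\frac{\Phi_p(t,z)}{t-z_j}\Big)=0,\qquad j=1,\dots,2g+1, \]
because $D^p$ feeds only powers of $t$ divisible by $p$ into the coefficient extraction. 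Thus the theorem is equivalent to the assertion that these $2g+1$ scalar conditions force $D\equiv 0$.

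Next I would extract information one variable at a time. Writing $\Phi_p/(t-z_j)=(t-z_j)^{(p-3)/2}\prod_{a\ne j}(t-z_a)^{(p-1)/2}$ shows that each $I^m_j$ is a polynomial in $z_j$ of degree at most $(p-3)/2<p$, whose coefficient of $z_j^{\,a}$ equals $(-1)^a\binom{(p-3)/2}{a}[t^{(g-m)p-(p-1)/2+a}]\prod_{a'\ne j}(t-z_{a'})^{(p-1)/2}$, where $\binom{(p-3)/2}{a}\ne 0 \pmod p$ by the hypothesis $p\ge 2g+1$. Since every $b_m^p$ contributes to $I^m_j$ only through monomials whose $z_j$-exponent is a multiple of $p$, while $I^m_j$ carries $z_j$-exponents in $[0,(p-3)/2]$, comparing the coefficient of $z_j^{\,l'p+a}$ on both sides (for each $l'\ge0$ and each $a\in\{0,\dots,(p-3)/2\}$) decouples the relation into, for each $j$, the linear relations $\sum_m d_{m}\,v_m=0$ over the field generated by the $z_a^p$ with $a\ne j$, where $d_m$ is the coefficient of $z_j^{\,l'p}$ in $b_m^p$ and $v_m$ is the vector of coefficients $\big([t^{(g-m)p-(p-1)/2+a}]\prod_{a'\ne j}(t-z_{a'})^{(p-1)/2}\big)_{a=0}^{(p-3)/2}$. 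Ranging over $l'$, forcing all the $d_m$ to vanish would give $b_m^p=0$, hence $b_m=0$, completing the proof.

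The crux, and the step I expect to be the \emph{main obstacle}, is therefore to show that for each fixed $j$ the $g$ vectors $v_0,\dots,v_{g-1}$ — the $g$ windows of $(p-1)/2$ consecutive coefficients of $\prod_{a'\ne j}(t-z_{a'})^{(p-1)/2}$ sitting just below the multiples $p,2p,\dots,gp$ — are linearly independent over that field. This is precisely a Cartier–Manin–type nonsingularity statement for the remaining $2g$ roots. I would establish it by exhibiting a suitable $g\times g$ minor of the matrix $(v_m)$ and checking that it is a nonzero element of $\F_p(z_{a\ne j}^p)$; since a polynomial in the $z_a^p$ is nonzero iff it is nonzero at one specialization, I would specialize the $z_a$ to convenient values that render $\prod_{a'\ne j}(t-z_{a'})$ explicit, so that the entries become concrete binomial coefficients and the minor reduces to a generalized Vandermonde/binomial determinant that is nonzero modulo $p$; the bound $p\ge 2g+1$ is what keeps the windows in range and the binomial entries nonvanishing. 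An alternative finish would be induction on $g$: the leading $z_j$-coefficient relations ($a=(p-3)/2$) involve $[t^{lp-1}]\prod_{a'\ne j}(t-z_{a'})^{(p-1)/2}$ only for $l=1,\dots,g-1$, since for $l=g$ that coefficient vanishes as the product has $t$-degree $g(p-1)<gp-1$; this matches the genus-$(g-1)$ analogue and handles $m=1,\dots,g-1$, after which a separate lower-order extraction disposes of $m=0$.
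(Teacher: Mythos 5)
Your two reduction steps are correct and cleanly done: passing to the fraction field $\F_p(z^p)$ and writing the coefficients as $p$-th powers $b_m(z)^p$, the generating-function identity $[t^{\,gp-1}]\bigl(D(t,z)^p\,\Phi_p/(t-z_j)\bigr)=\sum_m b_m(z)^p I^m_j(z)$, and the decoupling of the relation along the $z_j$-exponents $l'p+a$ with $0\le a\le (p-3)/2$. This validly reduces the theorem to your ``crux'': the linear independence of the $g$ window vectors $v_0,\dots,v_{g-1}$ over $\F_p(z_{a\ne j}^p)$. But that crux is the \emph{entire} content of the theorem in your approach, and you do not prove it; you only name two strategies. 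This is a genuine gap, and it is not a routine verification. The $g\times g$ minor you would need (columns indexed by $s=1,\dots,g$, where the window entry is $[t^{\,(g-m)p-s}]\prod_{a'\ne j}(t-z_{a'})^{(p-1)/2}$) reduces, after expanding along the row $m=0$ which is $(0,\dots,0,1)$ there, to the determinant of $\bigl([t^{\,rp-s}]\prod_{a'\ne j}(t-z_{a'})^{(p-1)/2}\bigr)_{r,s=1}^{g-1}$ --- i.e.\ to the nonsingularity of the Cartier--Manin matrix of the genus-$(g-1)$ curve $y^2=\prod_{a'\ne j}(t-z_{a'})$, in other words to generic ordinarity. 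Such determinants genuinely vanish for special curves, and the ``convenient'' specializations can fail: for $g=4$, $p=11$, specializing the eight $z_{a'}$ to the $8$th roots of unity gives $Q=(t^8-1)^5$, supported on exponents divisible by $8$, while the window for $m=2$ consists of the coefficients of $t^{17},\dots,t^{21}$; none of these is a multiple of $8$, so that row specializes to zero and every minor containing it vanishes. Hence ``the minor reduces to a generalized Vandermonde/binomial determinant that is nonzero modulo $p$'' cannot be waved through: choosing a specialization that works for all $(g,p)$ and evaluating the determinant is the real work, and it is absent. Your inductive alternative is also not set up correctly: the relation at $a=(p-3)/2$ is a single scalar condition per window, and the object it involves ($2g$ factors, each to the power $(p-1)/2$) is not the genus-$(g-1)$ instance of the theorem (which has $2g-1$ branch points), so there is no induction hypothesis to invoke.

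For comparison, the paper proves the theorem without any determinant or nonsingularity statement, by a disjoint-support argument on the first coordinates $I^m_1(z)$. Every coefficient of $I^m_1$ is $\pm\binom{(p-3)/2}{\ell_1}\prod_{i\ne 1}\binom{(p-1)/2}{\ell_i}$, hence nonzero mod $p$ since all $\ell_i\le (p-1)/2<p$; the exponent vectors of $I^m_1$ have all entries at most $(p-1)/2$ and total degree $(p-1)/2+mp-g$, so their reductions modulo $p$ (as points of $\F_p^{2g+1}$) determine $m$, i.e.\ the supports of $I^0_1,\dots,I^{g-1}_1$ reduce to pairwise disjoint sets. Multiplying by $c_m(z)\in\F_p[z^p]$ shifts exponents by multiples of $p$, so it neither changes these reductions nor creates cancellation within $c_m(z)I^m_1(z)$; consequently, in $\sum_m c_m(z)I^m_1(z)=0$ the terms for distinct $m$ have disjoint monomial supports, forcing each $c_m(z)I^m_1(z)=0$ and hence each $c_m(z)=0$. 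If you wish to keep your route, you must isolate your window lemma as a standalone statement and actually prove it (your specialization idea can likely be completed, but the choice of specialization must depend on $(g,p)$ more carefully than your sketch suggests); otherwise the support-disjointness argument is the economical fix.
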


\begin{proof}
For $m=0,\dots,g-1$,  the coordinates of the vector $I^m(z)$ are homogeneous polynomials in $z$ of degree
$(p-1)/2 + mp -g$ and
\bea
I^{m}_j(z)
=  {\sum}I_{j;\ell_1,\dots,\ell_{2g+1}}^mz_1^{\ell_1}\dots z_{2g+1}^{\ell_{2g+1}},
\eea
where the sum is over the elements of the set
\bea
\Ga^{m}_j = \{ (\ell_1,\dots,\ell_{2g+1})\in \Z^{2g+1}_{\geq 0}
\ |\
\sum_{i=1}^{2g+1} \ell_j=(p-1)/2 +mp-g, 
\\
\phantom{aaaaaaaaaaaaaa}
 0\leq \ell_j\leq (p-3)/2, \ \ 0\leq \ell_i\leq (p-1)/2\ \on{for}\ i\ne j\}
\eea
and
\bea
I_{j;\ell_1,\dots,\ell_{2g+1}}^m = (-1)^{(p-1)/2+mp-g}
\binom{(p-3)/2}{\ell_j}\prod_{i\ne j}\binom{(p-1)/2}{\ell_i} \ \ \in\ \ \FFF_p.
\eea
Notice that all coefficients $I_{j;\ell_1,\dots,\ell_{2g+1}}^m $ are nonzero.
Hence each solution $I^{m}(z) $ is nonzero.

We show that already the first coordinates  $I^m_1(z)$, $m=0,\dots,g-1$, 
are linear independent over the ring $\F_p[z]$.

Let $\bar \Ga^{m}_1 \subset \FFF_p^{2g+1}$ be the image of the
set $\Ga^{m}_1$ 
under the natural projection $\Z^{2g+1}\to \FFF_p^{2g+1}$. 
The points of $\bar \Ga^{m}_1$ are in bijective correspondence with the points of
$\Ga^{m}_1$. Any two sets $\bar \Ga^{m}_1$ and $\bar \Ga^{m'}_1$ do not intersect,
if $m\ne m'$.  (The sets $\bar \Ga^{m}_1$ are analogs in $\F_p^{2g+1}$ of the
Newton polytopes of the polynomials
$I^m_1(z)$.)

For any $m$ and any nonzero polynomial $c_m(z)\in \FFF_p[z_1^p,\dots, z_{2g+1}^p]$,
consider
the nonzero polynomial $c_m(z) I^{m}_1(z)\in \FFF_p[z_1,\dots,z_{2g+1}]$ and the set
$\Ga^{m}_{1,c_m}$ of points $\ell\in\Z^{2g+1}$ such that the monomial
$z_1^{\ell_1}\dots z_{2g+1}^{\ell_{2g+1}}$ enters $c_m(z) I^m_1(z)$ with nonzero coefficient.
Then the natural projection of $\Ga^{m}_{1,c_m}$ to $\FFF_p^{2g+1}$
coincides with $\bar \Ga^{m}_1$.
Hence the polynomials
$I^{m}_1(z)$,  $m=0,\dots, g-1$,
are linear independent over the ring $\F_p[z^p]$.
\end{proof}

\section{Binomial coefficients modulo $p$} 
\label{sec binom}

In this section we collect useful formulas on binomial coefficients.

\subsection{Lucas's theorem}

\begin{thm} [\cite{L}]
\label{thm L}
For non-negative integers $m$ and $n$ and a prime $p$,
 the following congruence relation holds:
\bean
\label{LT}
\binom{m}{n}\equiv \prod _{i=0}^k \binom{m_i}{n_i}\quad (\on{mod}\ p),
\eean
where $m=m_{k}p^{k}+m_{k-1}p^{k-1}+\cdots +m_{1}p+m_{0}$ and $n=n_{k}p^{k}+n_{k-1}p^{k-1}+\cdots +n_{1}p+n_{0}$
are the base $p$ expansions of $m$ and $n$ respectively. This uses the convention that 
$\binom{m}{n}=0$ if $m<n$.
\qed
\end{thm}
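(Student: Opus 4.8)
The plan is to avoid manipulating factorials directly and instead derive the congruence from an identity of generating functions in the polynomial ring $\F_p[x]$. The engine is the Frobenius identity $(1+x)^p\equiv 1+x^p$ in $\F_p[x]$, which holds because $\binom{p}{j}\equiv 0\ (\on{mod}\ p)$ for every $0<j<p$. Iterating (raising both sides to the $p$-th power and using that $y\mapsto y^p$ is a ring homomorphism of $\F_p[x]$) gives $(1+x)^{p^i}\equiv 1+x^{p^i}$ for all $i\ge 0$. Writing the base-$p$ expansion $m=\sum_{i=0}^k m_i p^i$, where I take $k$ large enough to also accommodate the digits of $n$ by padding with zeros, the whole theorem will fall out of expanding $(1+x)^m$ two ways and comparing coefficients.

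Concretely, I would compute in $\F_p[x]$:
\bea
(1+x)^m
&=& \prod_{i=0}^k \big((1+x)^{p^i}\big)^{m_i}
\ \equiv\ \prod_{i=0}^k (1+x^{p^i})^{m_i}
\\
&=& \prod_{i=0}^k \Big(\sum_{n_i=0}^{m_i}\binom{m_i}{n_i}x^{n_i p^i}\Big)
\quad (\on{mod}\ p).
\eea
On the left, the coefficient of $x^n$ is $\binom{m}{n}$. On the right, selecting one term $\binom{m_i}{n_i}x^{n_i p^i}$ from each factor contributes $\prod_{i=0}^k\binom{m_i}{n_i}$ to the coefficient of $x^N$, where $N=\sum_{i=0}^k n_i p^i$ and $0\le n_i\le m_i<p$.

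The decisive step is the coefficient-matching, and it rests entirely on the uniqueness of base-$p$ representation: since each $n_i$ lies in $\{0,\dots,p-1\}$, the exponent $N=\sum_i n_i p^i$ is \emph{exactly} the base-$p$ expansion of $N$, so a given power $x^N$ is produced by one and only one tuple $(n_0,\dots,n_k)$. Reading off the coefficient of $x^n$, with $n=\sum_i n_i p^i$ its own base-$p$ expansion, therefore yields $\binom{m}{n}\equiv\prod_{i=0}^k\binom{m_i}{n_i}$. The convention case is automatic: if some digit $n_i$ of $n$ exceeds $m_i$, then the $i$-th factor on the right never supplies the exponent $n_i p^i$, so $x^n$ does not appear and its coefficient is $0$, matching $\binom{m_i}{n_i}=0$ on the right. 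The only genuinely delicate point is this uniqueness-of-digits argument; the rest is the Frobenius identity together with bookkeeping, so I do not anticipate a real obstacle beyond stating the coefficient extraction carefully.
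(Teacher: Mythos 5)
Your proof is correct. Note, however, that the paper itself offers no argument to compare against: Theorem \ref{thm L} is stated with an immediate \textup{\qedsymbol} and attributed to Lucas's 1878 paper \cite{L}, i.e.\ it is quoted as a classical result. What you have written is the standard modern (Fine-style) proof via the Frobenius identity $(1+x)^p\equiv 1+x^p$ in $\F_p[x]$, and all the essential points are in place: the iteration $(1+x)^{p^i}\equiv 1+x^{p^i}$ via the Frobenius homomorphism, the factorization $(1+x)^m\equiv\prod_{i}(1+x^{p^i})^{m_i}$, and --- the one genuinely delicate step --- the observation that since every selected exponent tuple has $0\le n_i\le m_i\le p-1$, uniqueness of base-$p$ representation forces at most one tuple to contribute to the coefficient of $x^n$, so no cross terms can collide. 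Your handling of the degenerate case (some digit $n_i>m_i$, whence the coefficient of $x^n$ vanishes on both sides, consistent with the convention $\binom{m_i}{n_i}=0$) is also right. In short: the proposal supplies a complete, self-contained proof where the paper relies on a citation.
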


\begin{lem}
\label{lem 2a} For $a\in\Z_{>0}$, we have
\bea
\binom{2a}{a} \not\equiv 0\quad (\on{mod}\,p)
\eea
if and only if the base $p$ expansion of  $a=a_0+a_1p + a_2 p^2+\dots+a_kp^k$
has the property:
\bea
a_i \leq\frac{p-1}2\quad\on{for}\quad  i=0,\dots,k.
\eea
In that case
\bean
\label{2a}
\binom{2a}{a}\equiv \prod _{i=0}^k \binom{2a_i}{a_i}\quad (\on{mod}\ p).
\eean

\end{lem}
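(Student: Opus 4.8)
The plan is to apply Lucas's theorem (Theorem \ref{thm L}) directly to $\binom{2a}{a}$, the only subtlety being that doubling $a$ may produce carries in base $p$. Since $p\geq 2g+1\geq 3$ is odd, $(p-1)/2$ is an integer and the dichotomy $a_i\leq (p-1)/2$ versus $a_i\geq (p+1)/2$ governs exactly whether the $i$-th digit doubles without a carry: $2a_i\leq p-1$ in the first case and $2a_i\geq p+1$ in the second. First I would fix the base $p$ expansion $a=\sum_{i=0}^k a_i p^i$ and split into these two cases.

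For the ``if'' direction together with formula \Ref{2a}: suppose $a_i\leq (p-1)/2$ for all $i$. Then $2a_i\leq p-1$ for every $i$, so no carries occur and $2a=\sum_{i=0}^k (2a_i)\,p^i$ is literally the base $p$ expansion of $2a$. Lucas's theorem then yields $\binom{2a}{a}\equiv\prod_{i=0}^k\binom{2a_i}{a_i}\pmod p$, which is formula \Ref{2a}. Each factor $\binom{2a_i}{a_i}$ is a binomial coefficient of integers $<p$, hence a unit modulo $p$ (no factor of $p$ appears in the relevant factorials), so the product is nonzero and $\binom{2a}{a}\not\equiv 0$.

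For the ``only if'' direction: suppose some digit exceeds $(p-1)/2$, and let $j$ be the \emph{smallest} such index, so $a_j\geq (p+1)/2$ while $a_i\leq (p-1)/2$ for all $i<j$. The point of choosing the lowest bad digit is that the addition $a+a$ generates no carry in positions $0,\dots,j-1$, so the carry entering position $j$ is $0$. Then the $j$-th digit $c_j$ of $2a$ satisfies $c_j=2a_j-p$ (exactly one carry out, since $p+1\leq 2a_j\leq 2p-2$), and because $a_j<p$ we get $c_j=2a_j-p<a_j$. By Lucas's theorem the $j$-th factor $\binom{c_j}{a_j}$ vanishes (using the convention $\binom{m}{n}=0$ for $m<n$), hence $\binom{2a}{a}\equiv 0\pmod p$.

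I expect the reverse direction to be the crux, since it requires controlling the base $p$ digits of $2a$ in the presence of carries; the trick that resolves it is precisely to isolate the lowest digit $a_j>(p-1)/2$, which forces a zero incoming carry, pins down $c_j=2a_j-p$ exactly, and makes the inequality $c_j<a_j$ immediate. (As a sanity check, Kummer's theorem gives the same equivalence in one line, since $v_p\binom{2a}{a}$ equals the number of carries in computing $a+a$ in base $p$, and these carries are absent exactly when every $a_i\leq(p-1)/2$; but I would present the argument through Lucas's theorem, which is the tool already at hand.)
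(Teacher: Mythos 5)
Your proof is correct and takes the same route as the paper, which simply states that the lemma is a corollary of Lucas's theorem (Theorem \ref{thm L}); you have filled in the carry analysis that the paper leaves implicit. In particular, your treatment of the ``only if'' direction (isolating the lowest digit $a_j>(p-1)/2$ to force a zero incoming carry and conclude $\binom{c_j}{a_j}=0$) is exactly the detail needed to make the paper's one-line assertion rigorous.
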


The lemma is a corollary of Lucas's theorem.

\subsection{Useful identities}  For $0\leq k\leq (p-3)/2$, we have
\bean
\label{binom{(p-3)/2}{k}}
\binom{(p-3)/2}{k} 
&=& 
\binom{(p-1)/2}{k}\frac{(p-3)/2 -k+1}{(p-1)/2}
=
\binom{(p-1)/2}{k}\frac{p-2k-1}{p-1}
\\
\notag
&\equiv&
\binom{(p-1)/2}{k}(2k+1)
\quad
(\text{mod}\ p),
\eean
for $0\leq k\leq (p-1)/2$ 
\bean
\label{binom{(p-3)/2}{k-1}}
\binom{(p-3)/2}{k-1} 
&=& 
\binom{(p-1)/2}{k}\frac{k}{(p-1)/2}
=
\binom{(p-1)/2}{k}\frac{2k}{p-1}
\\
\notag
&\equiv&
\binom{(p-1)/2}{k}(-2k)
\quad
(\text{mod}\ p).
\eean
For a positive integer $k$, 
\bean
\label{-1/2k}
{-1/2 \choose k} &=& \frac {(-1/2)(-1/2 -1)\cdot  \cdot  \cdot (-1/2 - (k-2))(-1/2-(k-1))} {k!} 
\\
\notag
&=&
(-2)^{-k} \frac {1 \cdot 3 \cdot 5 \cdot ... \cdot (2k-1)}{k!}
=(-1)^k 2^{-k} \frac {(2k)!/(2 \cdot 4 \cdot 6 \cdot 8 \cdot ... \cdot 2k)}{k!}
\\
\notag
&
=&
(-1)^k 2^{-k} \frac {(2k)!/(2^k k!)}{k!}
=(-4)^k  \binom{2k}{k},
\eean
for $0\leq k\leq (p-1)/2$
\bean
\label{-1/2 k}
\binom{(p-1)/2}{k}\equiv (-4)^{-k}\binom{2k}k \qquad (\on{mod}\, p).
\eean

\section{Solutions $J^m(z)$}
\label{sec J^m(z)}

\subsection{Sets $\Delta^r_s$}
We introduce sets that are used later. 
For $r=0,\dots,g-1$, $s=0,\dots,g$,  define 
\bean
\label{Del mj}
\phantom{aaaaaa}
\Delta^{r}_s 
=
 \{(\ell_3,\dots,\ell_{2g+1})\in \Z^{2g-1}_{\geq 0}
\ | \  0\leq \sum_{i=3}^{2g+1} \ell_i +s-rp\leq (p-1)/2,\ \ 
 \ell_i \leq (p-1)/ 2 \}.
\eean

\subsection{Definition} Introduce the vectors $J^m(z)\in \F_p[z]^{2g+1}$, $m=0,\dots,g-1$, by the formula
\bean
\label{JI}
J^m(z)=\sum_{l=0}^m  I^{m-l}(z)z_1^{lp}  \binom{g-m -1+ l}{g-m-1},
\eean
that is, 
\bea
J^0(z) &=& I^0(z),
\\
J^1(z) &=&  I^{0}(z)z_1^{p} \binom{g-1}{g-2}  + I^1(z),
\\
J^2(z) &=&  I^{0}(z)z_1^{2p} \binom{g-1}{g-3}  +  I^{1}(z)z_1^{p} \binom{g-2}{g-3}  + I^2(z), 
\eea
and so on.

\begin{lem}
\label{lem ns} For $m=0,\dots,g-1$, the vector $J^m(z)$ is a solution of the KZ equations \Ref{KZ}.
Moreover, the $\F_p[z^p]$-module spanned by $J^m(z)$, $m=0,\dots,g-1$, coincides with the 
$\F_p[z^p]$-module $\mc M_{g,p}$ spanned by $I^m(z)$, $m=0,\dots,g-1$.
\qed
\end{lem}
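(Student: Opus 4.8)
The plan is to derive both assertions from the $\F_p[z^p]$-module structure on the solution set that was recorded just before Theorem \ref{thm inde}. For the first assertion, I would simply note that every coefficient appearing in \Ref{JI} lies in $\F_p[z^p]$: indeed $z_1^{lp}=(z_1^p)^l\in\F_p[z^p]$, while $\binom{g-m-1+l}{g-m-1}$ is an honest non-negative-integer binomial coefficient reduced to a scalar in $\F_p$. Since each $I^{m-l}(z)$ is a solution of \Ref{KZ} by \Ref{P I} together with Theorem \ref{thm SV}, and the set of polynomial solutions is closed under $\F_p[z^p]$-linear combinations (the equations \Ref{KZ} being $\F_p$-linear with $\der z_i^p/\der z_j=0$), the vector $J^m(z)$ is again a solution. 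This disposes of the first claim with no computation.

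For the equality of modules, one inclusion is immediate: the display above shows each $J^m(z)$ is an $\F_p[z^p]$-combination of $I^0(z),\dots,I^m(z)$, so the module spanned by the $J^m$ sits inside $\mc M_{g,p}$. For the reverse inclusion I would make the triangular structure of the transition explicit by isolating the $l=0$ term in \Ref{JI}, which contributes $z_1^0\binom{g-m-1}{g-m-1}I^m(z)=I^m(z)$, giving
\[ J^m(z) = I^m(z) + \sum_{l=1}^m z_1^{lp}\binom{g-m-1+l}{g-m-1}\,I^{m-l}(z). \]
Ordering the generators as $(I^0,\dots,I^{g-1})$, the matrix $M$ over $\F_p[z^p]$ that expresses $(J^0,\dots,J^{g-1})$ in terms of them is therefore lower-triangular, with every diagonal entry equal to $1$ and every off-diagonal entry in $\F_p[z^p]$.

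Such an $M$ is unipotent: $\mathrm{Id}-M$ is strictly lower-triangular, hence nilpotent, so $M^{-1}=\sum_{j\ge0}(\mathrm{Id}-M)^j$ is a finite sum whose entries again lie in $\F_p[z^p]$. Consequently each $I^m(z)$ is an $\F_p[z^p]$-linear combination of $J^0(z),\dots,J^m(z)$, which gives the reverse inclusion and hence the asserted equality of the two modules. I do not expect a genuine obstacle here; the one point worth stating carefully is that the coefficient of the top term $I^m(z)$ inside $J^m(z)$ is exactly $1$, coming from $\binom{g-m-1}{g-m-1}=1$. This is precisely what makes $M$ \emph{unipotent} rather than merely triangular, and it lets me invert $M$ over $\F_p[z^p]$ without having to argue that any diagonal binomial coefficient is nonzero modulo $p$ (which would otherwise require invoking $p\ge 2g+1$).
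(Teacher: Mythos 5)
Your proof is correct and is exactly the argument the paper leaves implicit: Lemma \ref{lem ns} is stated with its proof omitted (the \qed in the statement) because it follows at once from the $\F_p[z^p]$-module structure of the solution set recorded at the start of Section \ref{sec I^m(z)} together with the unipotent triangular form of the transition \Ref{JI}, which is precisely what you spell out. Nothing further is needed.
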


For the vector $P(t,z)$ in \Ref{te}, consider the Taylor expansion
\bean
\label{te+z}
P(t+z_1,z)= {\sum}_{i=0}^{(p-1)/2 + gp -g-1}  \tilde
P^i(z) t^i.
\eean

\begin{lem}
\label{lem PJ} For $m=0,\dots,g-1$, we have
\bean
\label{PJ}
J^m(z) = \tilde P^{(g-m)p-1}(z),
\eean
cf. formula \Ref{P I}.

\end{lem}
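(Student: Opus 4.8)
The plan is to expand the shifted Taylor coefficient $\tilde P^{(g-m)p-1}(z)$ directly and then read off the surviving terms with Lucas's theorem. Write $k=(g-m)p-1$. Comparing the expansions \Ref{te} and \Ref{te+z} through $(t+z_1)^i=\sum_{j}\binom{i}{j}t^j z_1^{\,i-j}$ and collecting the coefficient of $t^k$ gives
\be
\tilde P^{k}(z)=\sum_{i=k}^{D}\binom{i}{k}\,z_1^{\,i-k}\,P^i(z),\qquad D=(p-1)/2+gp-g-1,
\ee
where the upper limit is the top degree in $t$ and $P^i=0$ for $i>D$. So the entire content of the lemma reduces to deciding which indices $i$ in this range contribute modulo $p$ and to evaluating the corresponding binomial coefficients.

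First I would record the base-$p$ expansion $k=(g-m)p-1=(g-m-1)p+(p-1)$, whose digits are $k_0=p-1$, $k_1=g-m-1$, and $k_j=0$ for $j\ge2$; here $0\le g-m-1<p$ because $p\ge 2g+1$. By Lucas's theorem (Theorem \ref{thm L}) a term survives modulo $p$ only if its units digit satisfies $i_0\ge p-1$, hence only if $i\equiv-1\ (\on{mod}\ p)$, i.e.\ only if $i=Np-1$. Next I would pin down the range of $N$. The vanishing $\binom{i}{k}=0$ for $i<k$ forces $N\ge g-m$, while $i\le D$ together with $g\le(p-1)/2$ (again from $p\ge2g+1$) forces $N\le g$; setting $N=g-m+l$ this says precisely $l=0,\dots,m$. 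For such $i=(g-m+l)p-1=(g-m+l-1)p+(p-1)$ the digits are $i_0=p-1$, $i_1=g-m+l-1$, so Lucas's theorem yields
\be
\binom{i}{k}\equiv\binom{p-1}{p-1}\binom{g-m+l-1}{g-m-1}=\binom{g-m-1+l}{g-m-1}\quad(\on{mod}\ p).
\ee

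To finish, since $i-k=lp$ and $P^{(g-m+l)p-1}(z)=I^{m-l}(z)$ by \Ref{P I}, substituting back collapses the sum to
\be
\tilde P^{(g-m)p-1}(z)=\sum_{l=0}^{m}\binom{g-m-1+l}{g-m-1}\,z_1^{\,lp}\,I^{m-l}(z)=J^m(z),
\ee
which is exactly \Ref{PJ}. The computation is essentially bookkeeping once Lucas's theorem is invoked, so the only genuinely delicate point is the determination of the range of $N$, in particular the upper bound $N\le g$, which is where the hypothesis $p\ge 2g+1$ is used: from $Np-1\le gp-g+(p-3)/2$ one gets $(N-g)p\le(p-1)/2-g<p$, whence $N\le g$, ruling out any spurious terms with $i$ near the top degree $D$. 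I would also double-check that the explicit factor $z_1^{\,lp}$ in the definition \Ref{JI} of $J^m(z)$ matches the factor $z_1^{\,i-k}$ produced by the shift; this is automatic, because $t\mapsto t+z_1$ compares the two expansions at equal powers of $t$ and therefore keeps the $z_1$-dependence hidden inside each $P^i(z)$ untouched, so the identification is genuinely term-by-term.
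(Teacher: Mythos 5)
Your proof is correct and follows essentially the same route as the paper's: expand the shift $t\mapsto t+z_1$ binomially, use Lucas's theorem to show only indices $i\equiv -1\ (\mathrm{mod}\ p)$ contribute, and apply Lucas again to evaluate $\binom{(g-m+l)p-1}{(g-m)p-1}\equiv\binom{g-m-1+l}{g-m-1}$. The only difference is that you make explicit the bound $N\le g$ (which the paper leaves implicit in its trailing ``$+\dots$''), a worthwhile detail but not a new idea.
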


\begin{proof} We have
$P(t,z)= {\sum}_{i=0}^{(p-1)/2 + gp -g-1}  P^i(z) t^i$, hence
\bea
P(t+z_1,z)= {\sum}_{i=0}^{(p-1)/2 + gp -g-1}  P^i(z) (t+z_1)^i = {\sum}_{i=0}^{(p-1)/2 + gp -g-1}  P^i(z) 
\sum_{j=0}^i \binom{i}{j}t^j z_1^{i-j}.
\eea
If $p\not| (i+1)$, then $\binom{i}{(m-g)p-1} \equiv 0$ (mod $p$) by Lucas's theorem.  Hence
\bea
&&
\tilde P^{(g-m)p-1}(z) =P^{(g-m)p-1}(z) \binom{(g-m)p-1}{(g-m)p-1}+
P^{(g-m+1)p-1}(z)z_1^p \binom{(g-m+1)p-1}{(g-m)p-1}
\\
&&
\phantom{aaa}
+P^{(g-m+2)p-1}(z)z_1^{2p} \binom{(g-m+2)p-1}{(g-m)p-1} + \dots
\\
&&
\phantom{aaa}
= I^m(z) + I^{m-1}(z) z_1^p  \binom{g-m}{g-m-1} + I^{m-2}(z)z_1^{2p}  \binom{g-m+1}{g-m-1} + \dots,
\eea
where the last equality holds also by Lucas's theorem. This gives the lemma.
\end{proof}

\subsection{Formula for $J^m(z)$}
Denote
\bean
\label{laj}
\la_j=\frac{z_j-z_1}{z_2-z_1},
\qquad j=1,\dots,2g+1.
\eean

\begin{thm}
\label{thm J}
For $m=0,\dots,g-1$, we have
\bean
\label{CFoo}
J^m(z)= (z_2-z_1)^{(p-1)/2 +mp-g} K^m(\la),
\eean
where
\bean
\label{JK(la)}
K^m(\la)={\sum}_{\ell\in\Delta^m_g} K^m_\ell(\la),
\eean
 $\Delta^m_j$ is defined in \Ref{Del mj}, and
\bean
\label{Jme}
&&
K^m_\ell(\la)= (-1)^{(p-1)/2 +mp-g} 
\binom{(p-1)/2}{\sum_{i=3}^{2g+1}\ell_i +g-mp} 
{\prod}^{2g+1}_{i=2} 
\binom{(p-1)/2}{\ell_{i}} \la_s^{\ell_3}\dots\la_{2g+1}^{\ell_{2g+1}}
\\
\notag
&&
\phantom{aaaaaaa}
\times
(1, -2\sum_{i=3}^{2g+1}\ell_i -2g, 2\ell_3+1, 
\dots, 2\ell_{2g+1}+1).
\eean
\end{thm}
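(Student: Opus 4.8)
The plan is to compute $J^m(z)$ directly from Lemma~\ref{lem PJ}, which identifies $J^m(z)$ with the Taylor coefficient $\tilde P^{(g-m)p-1}(z)$ of $P(t+z_1,z)$. Writing $w_a=z_a-z_1$ and using $P_j(t,z)=(t-z_j)^{(p-3)/2}\prod_{a\ne j}(t-z_a)^{(p-1)/2}$, the shift $t\mapsto t+z_1$ gives
$$P_j(t+z_1,z)=(t-w_j)^{(p-3)/2}\prod_{a\ne j}(t-w_a)^{(p-1)/2},$$
with $w_1=0$. First I would rescale $t=(z_2-z_1)\tau$; since $w_a=(z_2-z_1)\la_a$ with $\la_1=0$ and $\la_2=1$, every factor becomes a power of $(z_2-z_1)$ times $(\tau-\la_a)$, and extracting the coefficient of $t^{(g-m)p-1}$ pulls out exactly the homogeneity factor $(z_2-z_1)^{(p-1)/2+mp-g}$ of \Ref{CFoo}. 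This reduces the theorem to computing, for each $j$, the coefficient of $\tau^{(g-m)p-1}$ in $f_j(\tau)=(\tau-\la_j)^{(p-3)/2}\prod_{a\ne j}(\tau-\la_a)^{(p-1)/2}$.

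Next I would expand each factor by the binomial theorem, choosing the exponent $\ell_a$ in the $a$-th factor. The total $\tau$-degree is $(2g+1)\tfrac{p-1}2-1-\sum_a\ell_a$, so picking out $\tau^{(g-m)p-1}$ imposes $\sum_a\ell_a=mp+\tfrac{p-1}2-g$. Because $\la_1=0$ forces $\ell_1=0$ and $\la_2=1$ kills the dependence on $\la_2$, the free indices are $(\ell_3,\dots,\ell_{2g+1})$, while $\ell_2=mp+\tfrac{p-1}2-g-\sum_{i\ge3}\ell_i$ is determined. The ranges $0\le\ell_i\le(p-1)/2$ together with $0\le\ell_2\le(p-1)/2$ translate precisely into the defining inequalities of $\Delta^m_g$ in \Ref{Del mj}, after noting that $\binom{(p-1)/2}{\ell_2}=\binom{(p-1)/2}{\sum_{i\ge3}\ell_i+g-mp}$ by the symmetry $\binom nk=\binom n{n-k}$. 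Collecting the signs $(-1)^{\ell_a}$ produces the global factor $(-1)^{(p-1)/2+mp-g}$, and the factors with $i\ge3$ contribute $\prod\binom{(p-1)/2}{\ell_i}\la_i^{\ell_i}$; this accounts for the common scalar in \Ref{Jme}.

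What remains is the distinguished factor $(\tau-\la_j)^{(p-3)/2}$, which carries the reduced exponent and produces the vector entry in \Ref{Jme}; here I would split into three cases. For $j=1$ the factor is $\tau^{(p-3)/2}$, contributing only $\ell_1=0$ and the entry $1$. For $j\ge3$ the relevant binomial is $\binom{(p-3)/2}{\ell_j}$, which by \Ref{binom{(p-3)/2}{k}} equals $\binom{(p-1)/2}{\ell_j}(2\ell_j+1)$ modulo $p$, giving the entry $2\ell_j+1$; one also checks that enlarging the range of $\ell_j$ up to $(p-1)/2$ is harmless, since the extra term carries the factor $2\cdot\tfrac{p-1}2+1\equiv0$. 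For $j=2$ the binomial is $\binom{(p-3)/2}{\ell_2}=\binom{(p-3)/2}{c-1}$ with $c=\sum_{i\ge3}\ell_i+g-mp$, and \Ref{binom{(p-3)/2}{k-1}} turns it into $\binom{(p-1)/2}{c}(-2c)\equiv\binom{(p-1)/2}{c}\bigl(-2\sum_{i\ge3}\ell_i-2g\bigr)$, the second entry. I expect the main obstacle to be purely organizational: matching the index ranges of the three cases with the single set $\Delta^m_g$ and verifying that the boundary terms vanish modulo $p$, so that one uniform summation over $\Delta^m_g$ reproduces all $2g+1$ components at once.
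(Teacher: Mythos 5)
Your proposal is correct and follows essentially the same route as the paper's own proof: identify $J^m(z)=\tilde P^{(g-m)p-1}(z)$ via Lemma~\ref{lem PJ}, rescale $t=(z_2-z_1)x$ to pull out the factor $(z_2-z_1)^{(p-1)/2+mp-g}$, expand each factor binomially with $\ell_2$ eliminated, and convert $\binom{(p-3)/2}{\ell_j}$ and $\binom{(p-3)/2}{\ell_2}$ into the entries $2\ell_j+1$ and $-2\sum_{i\ge3}\ell_i-2g$ using \Ref{binom{(p-3)/2}{k}} and \Ref{binom{(p-3)/2}{k-1}}. Your explicit check that the boundary terms (e.g.\ $\ell_j=(p-1)/2$, where $2\ell_j+1\equiv0$ modulo $p$) vanish, so all components can be summed over the single set $\Delta^m_g$, is a detail the paper leaves implicit, and it is handled correctly.
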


Using \Ref{-1/2 k} we may rewrite formula  \Ref{Jme} as
\bean
\label{Jmee}
&&
K^m_\ell(\la)= (-1)^{(p-1)/2}4^{-2\sum_{i=3}^{2g+1}\ell_i -g+mp}
\\
\notag
&&
\phantom{aaa}
\times\binom{2\sum_{i=3}^{2g+1}\ell_i +2g-2mp}{\sum_{i=3}^{2g+1}\ell_i+g -mp} 
{\prod}^{2g+1}_{i=2} 
\binom{2\ell_i}{\ell_{i}} \la_s^{\ell_3}\dots\la_{2g+1}^{\ell_{2g+1}}
\\
\notag
&&
\phantom{aaaaaaa}
\times
(1, -2\sum_{i=3}^{2g+1}\ell_i -2g, 2\ell_3+1, 
\dots, 2\ell_{2g+1}+1).
\eean

\begin{proof} We have
\bea
&&
P((z_2-z_1)x+z_1,z) = (z_2-z_1)^{(p-1)/2 + gp -g-1}
\\
&&
\phantom{aaa}
\times
x^{(p-1)/2}(x-1)^{(p-1)/2}\prod_{j=3}^{2g+1}(x-\la_j)^{(p-1)/2}\Big(\frac1x, \frac 1{x-1},
\frac 1{x-\la_3},\dots,\frac 1{x-\la_{2g+1}}\Big)
\eea
and
\bea
P((z_2-z_1)x+z_1,z) = {\sum}_{i=0}^{(p-1)/2 + gp -g-1}  \tilde
P^i(z) (z_2-z_1)^ix^i.
\eea
Hence $J^m(z)=\tilde P^{(g-m)p-1}(z)$ equals the coefficient of $x^{(g-m)p-1}$ in 
\bean
\label{mp in v}
x^{(p-1)/2}(x-1)^{(p-1)/2}\prod_{j=3}^{2g+1}(x-\la_j)^{(p-1)/2}\big(\frac1x, \frac 1{x-1},
\frac 1{x-\la_3},\dots,\frac 1{x-\la_{2g+1}}\big)
\eean
multiplied by $(z_2-z_1)^{(p-1)/2 + mp -g}$. 
 We have
\bea
&&
(z_2-z_1)^{-(p-1)/2 - mp +g}J_1^m(z) =
\\
&&
\phantom{aaaa}
(-1)^{(p-1)/2 + mp -g}{\sum} \binom{(p-1)/2}{\ell_2}
\dots \binom{(p-1)/2}{\ell_{2g+1}} \la_3^{\ell_3}\dots\la_{2g+1}^{\ell_{2g+1}},
\eea
where the sum is over the set
\bea
\Delta 
&=&
 \{(\ell_2,\dots,\ell_{2g+1})\in \Z^{2g}_{\geq 0}
\ | \  \sum_{i=2}^{2g+1} \ell_i = mp-g+(p-1)/2,\ \ 
\\
&&
\phantom{aaaa}
\ell_j \leq (p-1)/ 2, \  j=2,\dots,2g+1\}.
\eea
Expressing $\ell_2$ from the conditions defining $ \Delta $ we  write 
\bea
&&
(z_2-z_1)^{-(p-1)/2 - mp +g}J_1^m(z) 
=
(-1)^{(p-1)/2 + mp -g}
\\
&&
\phantom{aaa}
\times{\sum}
\binom{(p-1)/2}{\sum_{i=3}^{2g+1}\ell_i +g- mp} \binom{(p-1)/2}{\ell_3}
\dots \binom{(p-1)/2}{\ell_{2g+1}} \la_3^{\ell_3}\dots\la_{2g+1}^{\ell_{2g+1}},
\eea
where the sum is over the set
\bea
\Delta^m_g
&=&
 \{(\ell_3,\dots,\ell_{2g+1})\in \Z^{2g-1}_{\geq 0}
\ | \  0\leq \sum_{i=3}^{2g+1} \ell_i +g-mp\leq (p-1)/2,\ \ 
\\
&&
\phantom{aaaa}
 \ell_i \leq (p-1)/ 2, \  i=3,\dots,2g+1\}.
\eea
Similarly we have
\bea
&&
(z_2-z_1)^{-(p-1)/2 - mp +g}J_2^m(z) =
(-1)^{(p-1)/2+mp-g}
\\
&&
\phantom{aaa}
\times
{\sum} \binom{(p-3)/2}{\ell_2} \binom{(p-1)/2}{\ell_{3}}
\dots \binom{(p-1)/2}{\ell_{2g+1}} \la_3^{\ell_3}\dots\la_{2g+1}^{\ell_{2g+1}},
\eea
where the sum is over the set
\bea
\Delta'
&=&
 \{(\ell_2,\dots,\ell_{2g+1})\in \Z^{2g}_{\geq 0}
\ | \  \sum_{i=2}^{2g+1} \ell_i =  mp-g+(p-1)/2,\ \ 
\\
&&
\phantom{aaaa}
\ell_2 \leq (p-3)/ 2 \ \text{and}\ 
\ell_i \leq (p-1)/ 2 \ \on{for}\  i> 2\}.
\eea
Expressing $\ell_2$ from the conditions defining $\Delta'$ we write 
\bea
&&
(z_2-z_1)^{-(p-1)/2 - mp +g}J_2^m(z)
=
(-1)^{(p-1)/2+mp-g}
\\
&&
\phantom{aaa}
\times
{\sum}
\binom{(p-3)/2}{\sum_{i=3}^{2g+1}\ell_i +g- mp-1} \binom{(p-1)/2}{\ell_3}
\dots \binom{(p-1)/2}{\ell_{2g+1}} \la_3^{\ell_3}\dots\la_{2g+1}^{\ell_{2g+1}},
\eea
where the sum is over the set
\bea
 \Delta''
&=&
 \{(\ell_3,\dots,\ell_{2g+1})\in \Z^{2g-1}_{\geq 0}
\ | \  0\leq\sum_{i=3}^{2g+1} \ell_i +g-mp-1\leq (p-3)/2,\ \ 
\\
&&
\phantom{aaaa}
\ell_j \leq (p-1)/ 2, \  j=3,\dots,2g+1\}.
\eea
For $j=3,\dots,2g+1$, we have
\bea
&&
(z_2-z_1)^{-(p-1)/2 - mp +g}J_j^m(z)
=
(-1)^{(p-1)/2+mp-g}
\\
&&
\phantom{aaa}
\times
{\sum}\binom{(p-3)/2}{\ell_j}{\prod}^{2g+1}_{i=2,\ i\ne j} 
\binom{(p-1)/2}{\ell_{i}}
 \la_3^{\ell_3}\dots\la_{2g+1}^{\ell_{2g+1}},
\eea
where the sum is over the set
\bea
 \Delta'''
&=&
 \{(\ell_2,\dots,\ell_{2g+1})\in \Z^{2g}_{\geq 0}
\ | \  \sum_{i=2}^{2g+1} \ell_i =  mp-g+(p-1)/2,\ \ 
\\
&&
\phantom{aaaa}
 \ell_j \leq (p-3)/ 2 \ \text{and}\ 
 \ell_i \leq (p-1)/ 2, \  i\ne j\}.
\eea
Expressing $\ell_2$ from the conditions defining $\Delta'''$ we write 
\bea
&&
(z_2-z_1)^{-(p-1)/2 - mp +g}J_j^m(z)
=
(-1)^{(p-1)/2+mp-g}
\\
&&
\phantom{aaaa}
\times
{\sum}
\binom{(p-1)/2}{\sum_{i=3}^{2g+1}\ell_i+g - mp} 
\binom{(p-3)/2}{\ell_j}{\prod}^{2g+1}_{i=2,\ i\ne j} 
\binom{(p-1)/2}{\ell_{i}} \la_3^{\ell_3}\dots\la_{2g+1}^{\ell_{2g+1}},
\eea
where the sum is over the set
\bea
\bar \Delta''''
&=&
 \{(\ell_3,\dots,\ell_{2g+1})\in \Z^{2g-1}_{\geq 0}
\ | \  0\leq \sum_{i=3}^{2g+1} \ell_i +g-mp\leq (p-1)/2,\ \ 
\\
&&
\phantom{aaaa}
\ell_j \leq (p-3)/ 2 \ \text{and}\ 
 \ell_i \leq (p-1)/ 2, \  i\ne j\}.
\eea
Using identities  \Ref{binom{(p-3)/2}{k}}, \Ref{binom{(p-3)/2}{k-1}}
we may rewrite  $J_j^m(z)$, $j=2,\dots,2g+1$, in the form indicated in the theorem.
\end{proof}

\section{Cartier-Manin matrix}
\label{sec CM}
Consider the hyperelliptic curve $X$ with equation
\bea
y^2=x(x-1)(x-\la_3)\dots(x-\la_{2g+1}),
\eea
where $\la_3,\dots,\la_{2g+1}\in\F_p$, while, in the previous section,
$\la_3,\dots,\la_{2g+1}$ were rational functions in $z$, see fromula \Ref{laj}.

Following \cite{AH} define the $g\times g$ {\it Cartier-Manin matrix} $C(\la)=(C^r_s(\la))_{s,r=0}^{g-1}$ of that
curve. Namely, for $s=0,\dots,g-1$, expand
\bea
x^{g-s-1} \big(x (x-1) (x-\la_3)\dots (t-\la_{2g+1})\big)^{(p-1)/2}
={\sum}_k Q^k_s x^k
\eea
with $Q^k_j\in\F_p$ and   set
\bean
\label{CmjQ}
C^r_s(\la) := Q^{(g-r)p-1}_s, \qquad  r=0,\dots,g-1.
\eean
The Cartier-Manin matrix represents
the action of the Cartier operator on the space of holomorphic differentials of the hyperelliptic curve. 
That operator is dual to the Frobenius operator on the cohomology group 
$H^1(X,\mc O_X)$,
see for example, \cite{AH}.

\begin{lem}
\label{lem CM}

We have
\bean
\label{CIJ}
C^r_s(\la)
&=& {\sum}_{\ell\in\Delta^r_s} C^r_{s;\,\ell}(\la),
\eean
where  $\Delta^r_s$ is defined in \Ref{Del mj} and
\bean
\label{Cij}
&&
C^r_{s;\,\ell}(\la)=(-1)^{(p-1)/2 +rp -s}
\binom{(p-1)/2}{\sum_{i=3}^{2g+1}\ell_i+s-rp}
 \prod_{i=3}^{2g+1}\binom{(p-1)/2}{\ell_i} 
\la_3^{\ell_3}\dots\la_{2g+1}^{\ell_{2g+1}}.
\eean
\qed
\end{lem}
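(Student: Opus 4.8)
The plan is to compute $C^r_s(\la)=Q^{(g-r)p-1}_s$ directly: expand the product in powers of $x$, extract the coefficient of $x^{(g-r)p-1}$, and then match the result to \Ref{Cij} by a single change of summation variables. This is essentially the same computation that produced the first coordinate $J^m_1(z)$ in the proof of Theorem \ref{thm J}, under the substitutions $m\mapsto r$ and $g\mapsto s$, with the prefactor $x^{g-s-1}$ playing the role that the factor $1/x$ (i.e.\ the $s=g$ case) played there. Accordingly I expect no conceptual difficulty, only careful bookkeeping.

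First I would write
\[
x^{g-s-1}\big(x(x-1)(x-\la_3)\cdots(x-\la_{2g+1})\big)^{(p-1)/2}
= x^{\,g-s-1+(p-1)/2}(x-1)^{(p-1)/2}\prod_{i=3}^{2g+1}(x-\la_i)^{(p-1)/2},
\]
and expand the factors as $(x-1)^{(p-1)/2}=\sum_a\binom{(p-1)/2}{a}(-1)^{(p-1)/2-a}x^a$ and $(x-\la_i)^{(p-1)/2}=\sum_{\ell_i}\binom{(p-1)/2}{\ell_i}(-\la_i)^{(p-1)/2-\ell_i}x^{\ell_i}$, with all exponents $a,\ell_i$ constrained to $[0,(p-1)/2]$. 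A monomial $x^{a}\prod_i x^{\ell_i}$ contributes to $x^{(g-r)p-1}$ exactly when $a=(g-r)p-g+s-(p-1)/2-\sum_{i=3}^{2g+1}\ell_i$. Hence $C^r_s(\la)$ is the sum over the admissible tuples of $\binom{(p-1)/2}{a}(-1)^{(p-1)/2-a}\prod_i\binom{(p-1)/2}{\ell_i}(-\la_i)^{(p-1)/2-\ell_i}$.

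The decisive step is the substitution $\ell_i\mapsto(p-1)/2-\ell_i$ for $i=3,\dots,2g+1$, equivalently the symmetry $\binom{(p-1)/2}{\ell_i}=\binom{(p-1)/2}{(p-1)/2-\ell_i}$. Under it each $\la_i^{(p-1)/2-\ell_i}$ becomes $\la_i^{\ell_i}$, as demanded in \Ref{Cij}, and a short calculation collapses the formula for $a$ to $a=\sum_{i=3}^{2g+1}\ell_i+s-rp$, which is precisely the lower index of the leading binomial in \Ref{Cij}. The constraints $0\le\ell_i\le(p-1)/2$ and $0\le a\le(p-1)/2$ then become exactly the defining inequalities of $\Delta^r_s$ in \Ref{Del mj}, matching the summation range. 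It remains only to collect signs: the factor $(-1)^{(p-1)/2-a}$ together with $\prod_i(-1)^{(p-1)/2-\ell_i}=(-1)^{\sum_i\ell_i}$ (after the substitution) combine to the overall $(-1)^{(p-1)/2+rp-s}$ in \Ref{Cij}. The only point requiring attention — and the main, but minor, obstacle — is keeping the exponent and sign accounting consistent through this change of variables; once that is done, \Ref{CIJ} follows term by term.
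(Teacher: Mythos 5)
Your proof is correct and is essentially the paper's own argument: the paper dismisses this lemma as a ``straightforward calculation similar to the proof of Theorem \ref{thm J}'', and your computation is exactly that calculation carried out explicitly (extract the coefficient of $x^{(g-r)p-1}$, with the index substitution $\ell_i\mapsto(p-1)/2-\ell_i$ amounting only to a relabeling of the summation variables used there). The exponent, range, and sign bookkeeping all check out, so nothing further is needed.
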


The lemma is proved by straightforward calculation similar to the proof of Theorem \ref{thm J}.

We may rewrite \Ref{Cij} as
\bean
\label{Cij 1}
&&
C^r_{s;\,\ell}(\la)
=
(-1)^{(p-1)/2}  4^{-2\sum_{i=3}^{2g+1}\ell_i-s+rp}
\\
\notag
&&
\phantom{aaaaaaaaa}
\times
\binom{2\sum_{i=3}^{2g+1}\ell_i+2s-2rp}{\sum_{i=3}^{2g+1}\ell_i+s-rp}
 \prod_{i=3}^{2g+1}\binom{2\ell_i}{\ell_i} 
\la_3^{\ell_3}\dots\la_{2g+1}^{\ell_{2g+1}}.
\eean

\section{Comparison of solutions over $\C$ and $\F_p$}
\label{sec comp}

Now we will
\begin{enumerate}
\item
 distinguish one holomorphic solution of the KZ equations, 
 \item
  expand it into the Taylor series, 
  \item
 for any $p\geq 2g+1$ reduce this Taylor expansion modulo $p$, 
  \item
  observe in that reduction of the Taylor expansion
   all polynomial solutions, that we have constructed and nothing more.

\end{enumerate}

\subsection{Distinguished holomorphic solution}  
Recall that holomorphic solutions of our KZ equations have the form
 $ I(z)=(I_1(z),\dots,I_{2g+1}(z))$,  where
\bea
I_j(z)
=
\int_{\ga} \frac{dt}{\sqrt{(t-z_1)\dots (t-z_{2g+1})}}\,\frac 1{t-z_j}
\eea
and $\ga$ is an oriented curve on the hyperelliptic curve with equation
 $y^2= (t-z_1)\dots(t-z_{2g+1})$. Assume that $z_3,\dots,z_{2g+1}$ are  closer to
 $z_1$ than to $z_2$:
 \bea
 \Big| \frac{z_j-z_1}{z_2-z_1}\Big|<\frac 12, \qquad
 j=3,\dots,2g+1.
 \eea
Choose $\ga$ to be the circle 
 $  \big\{ t\in \C\ |\ \big|\frac{t-z_1}{z_2-z_1}\big|=\frac12\big\}$
 oriented counter-clockwise, and multiply the vector $I(z)$ by the normalization constant
 $1/2\pi$.
 
 We call this solution $I(z)$ the {\it distinguished} solution.

\subsection{Rescaling} 
Change variables and write
\bean
\label{IK}
I(z_1,\dots,z_{2g+1})=(z_2-z_1)^{-1/2-g} L(\la_3,\dots,\la_{2g+1}),
\eean
where 
\bea
(\la_3,\dots,\la_{2g+1}) =\Big(\frac{z_3-z_1}{z_2-z_1},\dots,\frac{z_{2g+1}-z_1}{z_2-z_1}\Big),&
\eea
$L(\la)
=
(L_1,\dots,L_{2g+1})$,
\bea
L_j
= \frac{-1}{2\pi}
\int _{|x|=1/2}
\frac{dx}{\sqrt{x(x-1)(x-\la_3)\dots (x-\la_{2g+1})}}\,\frac 1{x-\la_j},
\eea
and we set 
$\frac 1{x-\la_1}:=\frac1x$,
$\frac 1{x-\la_2}:=\frac1{x-1}$.

 The function $L(\la)$ is holomorphic at the point $\la=0$.  Hence
 \bea
 \label{Ls}
L(\la)=
\sum_{(k_3,\dots,k_{2g+1})\in \Z^{2g-1}_{\geq 0}}L_{k_3,\dots,k_{2g+1}}\la_3^{k_3}\dots\la_{2g+1}^{k_{2g+1}},
\eea
where the coefficients lie in $\Z[\frac12]^{2g+1}$. Hence for any $p\geq 2g+1$, this power
series can be projected to a formal power series in $\F_p[\la]^{2g+1}$.

We relate this power series and the polynomial solutions $J^m(z)$, $m=0,\dots,g-1$, constructed earlier.

\subsection{Taylor expansion of $L(\la)$}

\begin{lem}
\label{lem K(0)}
We have
\bean
\label{K(0)}
L(0,\dots,0)=(-1)^g \binom{-1/2}{g}(1,-2g,1,\dots,1).
\eean
\end{lem}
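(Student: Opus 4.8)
The plan is to evaluate the vector $L(\la)$ at $\la_3=\dots=\la_{2g+1}=0$ directly from its integral representation. At $\la=0$, the hyperelliptic curve degenerates and the integrand for each component $L_j$ becomes an explicit elementary function of the single variable $x$, so I can compute each residue integral. First I would substitute $\la_3=\dots=\la_{2g+1}=0$ into
\bea
L_j
= \frac{-1}{2\pi}
\int _{|x|=1/2}
\frac{dx}{\sqrt{x(x-1)(x-\la_3)\dots (x-\la_{2g+1})}}\,\frac 1{x-\la_j},
\eea
which collapses the product under the square root to $x^{2g-1}(x-1)$, giving an integrand of the form $x^{-(2g-1)/2}(x-1)^{-1/2}\,(x-\la_j)^{-1}$ with the conventions $\frac1{x-\la_1}:=\frac1x$ and $\frac1{x-\la_2}:=\frac1{x-1}$.

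Next I would expand $\big(x(x-1)\big)^{-1/2}=x^{-1/2}(x-1)^{-1/2}$ about $x=0$ inside the disk $|x|=\tfrac12$, where $|x|<1$ guarantees convergence of $(x-1)^{-1/2}=(-1)^{-1/2}(1-x)^{-1/2}$ via the binomial series $(1-x)^{-1/2}=\sum_{k\ge0}\binom{-1/2}{k}(-1)^k x^k=\sum_{k\ge0}(-1)^k\binom{-1/2}{k}x^k$. Combined with the remaining powers of $x$ from the degenerated radical, each integrand is a Laurent series in $x$, and the contour integral $\frac{-1}{2\pi}\oint_{|x|=1/2}(\cdots)\,dx$ extracts $-i$ times the residue at $x=0$ (since $\frac{1}{2\pi i}\oint=\Res$, the normalization $\frac{-1}{2\pi}$ produces the stated real value after the $\sqrt{}$ branch is fixed). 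The key computation is to identify, for each $j$, the coefficient of $x^{-1}$ in the relevant Laurent expansion, which will be a single binomial coefficient $\binom{-1/2}{g}$ up to a power of $-1$.

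The structure of the answer tells me what to expect: the first and third-through-last components all equal the same value (the entries $1$ in $(1,-2g,1,\dots,1)$), while the second component differs by the factor $-2g$. This asymmetry comes precisely from the two conventions: for $j\ne 2$ the pole contributed by $\frac1{x-\la_j}$ at $\la=0$ coincides with the branch point at $x=0$ (or sits at the regular point $x=1$ outside nothing problematic), whereas for $j=2$ the factor $\frac1{x-1}$ shifts which Taylor coefficient of the radical is selected, producing the extra combinatorial factor. I would therefore treat $j=2$ as a separate case, expanding $\frac1{x-1}=-\sum_{k\ge0}x^k$ and re-indexing, and verify that the residue picks up $\binom{-1/2}{g-1}$ or an equivalent expression that simplifies, using the identity $(2g)\binom{-1/2}{g-1}$-type relations, to $-2g$ times the common value $(-1)^g\binom{-1/2}{g}$.

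The main obstacle will be bookkeeping the branch of the square root and the orientation/normalization constant so that the overall sign and the factor $(-1)^g$ come out correctly, rather than the combinatorics themselves, which reduce to a single binomial identity. I expect to fix the branch by a definite choice at one point of the contour and track it consistently; once the sign conventions are pinned down, reading off the residues is routine, and matching the $j=2$ case to the factor $-2g$ is the one spot requiring a short auxiliary computation with the binomial coefficient $\binom{-1/2}{g}$.
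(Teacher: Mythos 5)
Your plan is essentially the paper's own proof: set $\la=0$, expand the integrand in a binomial series inside $|x|=1/2$, read off the residue at $x=0$ (which gives $(-1)^g\binom{-1/2}{g}$ for $j\ne 2$), and treat $j=2$ separately, where a binomial identity of the form $\binom{-3/2}{g-1}=-2g\binom{-1/2}{g}$ produces the factor $-2g$. One bookkeeping slip to fix when you execute it: at $\la=0$ the product under the radical is $x^{2g}(x-1)$ (the standalone factor $x$ plus the $2g-1$ factors $x-\la_i$), not $x^{2g-1}(x-1)$, so the radical contributes the integer power $x^{-g}$; with your half-integer exponent $x^{-(2g-1)/2}$ the integrand would not even be a Laurent series at $0$. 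After that correction the computation runs exactly as you describe; the only cosmetic difference from the paper is that for $j=2$ it absorbs $\frac{1}{x-1}$ into the radical to get $(x-1)^{-3/2}$ and uses a single binomial coefficient, rather than multiplying by a separate geometric series and re-indexing.
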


\begin{proof} We have $ \frac{-1}{2\pi}=\frac {(-1)^{-1/2}}{2\pi i}$ and 
\bea
L_1(0,\dots,0)
&=&
 \frac {(-1)^{-1/2}}{2\pi i}\int _{|x|=1/2} (x-1)^{-1/2}\frac{dx}{x^{g+1}}
=
\frac {1}{2\pi i}\int _{|x|=1/2} (1-x)^{-1/2}\frac{dx}{x^{g+1}}
\\
&=&
\frac {1}{2\pi i}\int _{|x|=1/2} \sum_{k=0}^\infty(-1)^kx^k\binom{-1/2}k
\frac{dx}{x^{g+1}}
=(-1)^g\binom{-1/2}g,
\eea
\bea
&&
L_2(0,\dots,0)
=
 \frac {(-1)^{-1/2}}{2\pi i}\int _{|x|=1/2} (x-1)^{-3/2}\frac{dx}{x^{g}}
=
-\frac {1}{2\pi i}\int _{|x|=1/2} (1-x)^{-3/2}\frac{dx}{x^{g}}
\\
&&
\phantom{a}
=
-\frac {1}{2\pi i}\int _{|x|=1/2} \sum_{k=0}^\infty(-1)^kx^k\binom{-3/2}k\frac{dx}{x^{g}}
=(-1)^{g}\binom{-3/2}{g-1}
=(-1)^{g}\binom{-1/2}{g}(-2g).
\eea
The coordinates $L_j(0,\dots,0)$ for $j>2$ are calculated similarly.
\end{proof}

\begin{lem}
\label{lem TaC}
We have
\bean
\label{ex5}
L(\la)=
\sum_{(k_3,\dots,k_{2g+1})\in \Z^{2g-1}_{\geq 0}}L_{k_3,\dots,k_{2g+1}}\la_3^{k_3}\dots\la_{2g+1}^{k_{2g+1}},
\eean
where
\bean
\label{CFo}
L_{k_3,\dots, k_{2g+1}} \!\!
&=&
(-1)^{g}
\binom{-1/2}{k_3+\dots+k_{2g+1}+g}
\prod_{i=3}^{2g+1}\binom{-1/2}{k_i} 
\\
\notag
&&
\times
(1, -2k_3-\dots-2k_{2g+1}-2g, 2k_3+1,\dots,2k_{2g+1}+1).
\eean
\end{lem}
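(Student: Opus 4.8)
The plan is to compute each coordinate $L_j$ of $L(\la)$ directly from its contour-integral definition, by expanding the integrand as a power series in $\la_3,\dots,\la_{2g+1}$ and extracting the coefficient of $\la_3^{k_3}\dots\la_{2g+1}^{k_{2g+1}}$ as a residue at $x=0$. This is the characteristic-zero mirror of the proof of Theorem \ref{thm J}: the master function $\prod_a(t-z_a)^{-1/2}$ plays the role of the master polynomial $\prod_a(t-z_a)^{(p-1)/2}$, the exponent $-1/2$ replaces $(p-1)/2$, and the coefficients $\binom{-1/2}{k}$, $\binom{-3/2}{k}$ replace $\binom{(p-1)/2}{k}$, $\binom{(p-3)/2}{k}$.

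First I would expand the $\la$-dependent factors of the integrand. For $i\geq 3$,
\be
(x-\la_i)^{-1/2}=x^{-1/2}\sum_{k_i\geq 0}\binom{-1/2}{k_i}(-1)^{k_i}\la_i^{k_i}x^{-k_i},
\ee
so in the integrand of $L_1$ (with $\frac1{x-\la_1}=\frac1x$) the coefficient of $\la_3^{k_3}\dots\la_{2g+1}^{k_{2g+1}}$ is $(-1)^{\sum_i k_i}\prod_{i\geq 3}\binom{-1/2}{k_i}\,(x-1)^{-1/2}x^{-g-1-\sum_i k_i}$. For a coordinate $j\geq 3$ the extra factor $\frac1{x-\la_j}$ merges with $(x-\la_j)^{-1/2}$ into $(x-\la_j)^{-3/2}$, expanded with $\binom{-3/2}{k_j}$ in place of $\binom{-1/2}{k_j}$; for $j=2$ the factor $\frac1{x-1}$ merges with $(x-1)^{-1/2}$ into $(x-1)^{-3/2}$. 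In every case the power of $x$ multiplying the chosen monomial is the same, namely $x^{-g-1-\sum_i k_i}$ for $j\neq 2$ and $x^{-g-\sum_i k_i}$ for $j=2$, so the residue extraction is uniform.

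Next I would convert $(x-1)^{-1/2}$ and $(x-1)^{-3/2}$ into $(1-x)^{-1/2}$ and $-(1-x)^{-3/2}$ using the branch normalization already fixed in the proof of Lemma \ref{lem K(0)}, which absorbs the prefactor $\frac{-1}{2\pi}=\frac{(-1)^{-1/2}}{2\pi i}$. The residue $\frac1{2\pi i}\int_{|x|=1/2}(1-x)^{-a}x^{-n}\,dx$ then equals the coefficient of $x^{n-1}$ in the binomial series of $(1-x)^{-a}$. For $j=1$ and $j\geq 3$ this gives $(-1)^{g+\sum_i k_i}\binom{-1/2}{g+\sum_i k_i}$; for $j=2$ it gives $(-1)^{g-1+\sum_i k_i}\binom{-3/2}{g-1+\sum_i k_i}$. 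After collecting signs, every coordinate acquires the common scalar $(-1)^g\binom{-1/2}{g+\sum_i k_i}\prod_{i\geq 3}\binom{-1/2}{k_i}$, multiplied by $1$ for $j=1$, by $\binom{-3/2}{g-1+\sum_i k_i}/\binom{-1/2}{g+\sum_i k_i}$ for $j=2$, and by $\binom{-3/2}{k_j}/\binom{-1/2}{k_j}$ for $j\geq 3$. Finally I would apply the identities $\binom{-3/2}{k}=(2k+1)\binom{-1/2}{k}$ and $\binom{-3/2}{k-1}=-2k\binom{-1/2}{k}$, the characteristic-zero analogs of \Ref{binom{(p-3)/2}{k}} and \Ref{binom{(p-3)/2}{k-1}}, each proved by telescoping the ratio of falling factorials. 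With $k=g+\sum_i k_i$ the $j=2$ factor becomes $-2g-2\sum_i k_i$, and with $k=k_j$ the $j$-th factor becomes $2k_j+1$, reproducing the vector of \Ref{CFo}.

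The residue extraction itself is routine; the only genuine obstacle is the bookkeeping of the square-root branch and the factors $(-1)^{-1/2}$, that is, justifying the replacements $(x-1)^{-1/2}\mapsto(1-x)^{-1/2}$ and $(x-1)^{-3/2}\mapsto -(1-x)^{-3/2}$ consistently across all coordinates. Since Lemma \ref{lem K(0)} already performs exactly this normalization at $\la=0$ (the case $k_3=\dots=k_{2g+1}=0$), I would invoke the same branch and use Lemma \ref{lem K(0)} as a consistency check that the general-$k$ formula specializes correctly.
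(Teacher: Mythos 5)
Your proposal is correct and is essentially the paper's own proof: the paper proves Lemma \ref{lem TaC} by remarking that it is ``similar to the proof of Lemma \ref{lem K(0)}'', i.e.\ exactly the contour-integral computation you carry out — expand each $(x-\la_i)^{-1/2}$ (and the merged factors $(x-\la_j)^{-3/2}$, $(x-1)^{-3/2}$) as binomial series, extract the residue at $x=0$ with the branch normalization $\frac{-1}{2\pi}=\frac{(-1)^{-1/2}}{2\pi i}$, and simplify via $\binom{-3/2}{k}=(2k+1)\binom{-1/2}{k}$ and $\binom{-3/2}{k-1}=-2k\binom{-1/2}{k}$. Your detailed bookkeeping (the uniform power $x^{-g-1-\sum_i k_i}$ for $j\neq 2$, $x^{-g-\sum_i k_i}$ for $j=2$, and the resulting coordinates $1$, $-2g-2\sum_i k_i$, $2k_j+1$) checks out and reproduces \Ref{CFo} exactly.
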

\begin{proof}
The  proof is similar to the proof of Lemma \ref{lem K(0)}.
\end{proof}

Using formula \Ref{-1/2k}
we may reformulate \Ref{CFo} as 
\bean
\label{C2m}
&&
 L_{k_3,\dots,k_{2g+1}} = 4^{-2(k_3+\dots+k_{2g+1})-g} 
\\
\notag
&&
\phantom{aasaaaass}
\times
\binom{2(k_3+\dots+k_{2g+1}+g)}{k_3+\dots+k_{2g+1}+g} 
\binom{2k_3}{k_3}\dots
\binom{2k_{2g+1}}{k_{2g+1}} 
\\
\notag
&&
\phantom{aaaaaasaaass}
\times 
(1, -2k_3-\dots-2k_{2g+1}-2g, 2k_3+1,\dots,
2k_{2g+1}+1). 
\eean

\subsection{Coefficients,  nonzero modulo $p$}
Given $(k_3,\dots,k_{2g+1})\in\Z^{2g-1}_{\geq 0}$, let 
\bea
k_i = k_i^0 + k_i^1p +\dots+ k^a_ip^{a}, \quad 0\leq k_i^j\leq p-1, \quad i=3,\dots,2g+1,
\eea
be the $p$-ary expansions. Assume that $a$ is such that not all numbers
$k^a_i$, $i=3,\dots,2g+1$, are equal to zero. By Lemma \ref{lem 2a}, the product
$\prod_{i=3}^{2g+1}\binom{2k_i}{k_i}$ is not congruent to zero modulo $p$ if and only if
\bean
\label{admk}
k_i^j\leq \frac{p-1}2 \qquad\on{ for\  all}\ \  i,\ j.
\eean
Assume that condition \Ref{admk} holds. Then for any $j=0,\dots,a,$ we have
\bea
\sum_{i=3}^{2g+1}k_i^j 
\leq (2g-1) \frac{p-1}2=gp -g - \frac{p-1}2 < gp.
\eea
Define the {\it shift coefficients} $(m_0, \dots, m_{a+1})$ as follows. Namely, put
$m_0=g$. We have $\sum_{i=3}^{2g+1}k^0_i+g<gp$. Hence there exists a unique integer
$m_1$, $0\leq m_1<g$, such that
\bea
0\leq \sum_{i=3}^{2g+1}k^0_i+g-m_1p<p.
\eea
We have $\sum_{i=3}^{2g+1}k^1_i+m_1 < gp$. Hence there exists a unique integer $m_2$, 
$0\leq m_2<g$, such that
\bea
0\leq \sum_{i=3}^{2g+1}k^1_i+m_1-m_2p<p,
\eea
and so on. We have $0\leq m_j<g$ for all $j=1,\dots, a+1$.

We say that a tuple $(k_3,\dots,k_{2g+1})$ is {\it admissible} if it has property \Ref{admk} and
its shift coefficients
$(m_0,\dots,m_{a+1})$  satisfy the system of inequalities
\bean
\label{shc}
\sum_{i=3}^{2g+1}k_i^j - m_{j+1}p + m_j \leq \frac{p-1}2,
\quad 
j=0,\dots,a.
\eean

\begin{thm}
\label{thm nzc}

We have $L_{k_3,\dots, k_{2g+1}}\not\equiv 0$ (mod $p$)  if and only if
the tuple $(k_3,\dots,k_{2g+1})$ is admissible. The tuple $(k_3,\dots,k_{2g+1})$ is admissible, 
if and only if 
$(k^j_3,\dots,k^j_{2g+1})\in \Delta^{m_{j+1}}_{m_j}$ for $j=0,\dots,a$, 
where the sets $\Delta^r_s$ are defined in \Ref{Del mj}. If the tuple  $(k_3,\dots,k_{2g+1})$
is admissible, then modulo $p$ we have
\bean
\label{Lkk}
&&
L_{k_3,\dots, k_{2g+1}}\la_3^{k_3}\dots\la_{2g+1}^{k_{2g+1}}
\equiv (-1)^{a(p-1)/2}\binom{2m_{a+1}}{m_{a+1}}
\\
\notag
&&
\phantom{aaa}
\times
\Big(
\prod_{j=1}^a C^{m_{j+1}}_{m_j; k^j_3,\dots,k^j_{2g+1}}(\la_3^{p^j}, \dots,\la_{2g+1}^{p^j})
\Big)
K^{m_{1}}_{k^0_3,\dots,k^0_{2g+1}}(\la_3, \dots,\la_{2g+1}),
\eean
 where $C^r_{s;\,\ell}(\la)$ are terms of the Cartier-Manin matrix expansion in \Ref{CIJ}
 and $K^m_\ell(\la)$ are the terms of the expansion in \Ref{CFoo} of the solution $J^m(z)$.

\end{thm}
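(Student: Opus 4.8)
The plan is to start from the binomial form \Ref{C2m} of the coefficient $L_{k_3,\dots,k_{2g+1}}$, whose scalar part is $4^{-2K-g}\binom{2(K+g)}{K+g}\prod_{i=3}^{2g+1}\binom{2k_i}{k_i}$ with $K=\sum_{i=3}^{2g+1}k_i$, and to analyze each binomial modulo $p$ through its base-$p$ digits. First I would record the arithmetic meaning of the shift coefficients: the recursion defining $(m_0,\dots,m_{a+1})$ with $m_0=g$ is exactly the carry recursion for the base-$p$ addition $K+g$. Writing $r_j:=\sum_{i=3}^{2g+1}k_i^j+m_j-m_{j+1}p$ one then has $0\le r_j<p$ and $K+g=\sum_{j=0}^a r_j p^j+m_{a+1}p^{\,a+1}$, so that $r_0,\dots,r_a,m_{a+1}$ are precisely the base-$p$ digits of $K+g$.

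For the vanishing criterion I would apply Lemma \ref{lem 2a} twice. Since the vector factor in \Ref{C2m} has first entry $1$, the relation $L_{k_3,\dots,k_{2g+1}}\not\equiv0$ holds iff the scalar $\binom{2(K+g)}{K+g}\prod_{i=3}^{2g+1}\binom{2k_i}{k_i}$ is nonzero mod $p$. By Lemma \ref{lem 2a}, $\prod_{i}\binom{2k_i}{k_i}\not\equiv0$ iff every digit satisfies $k_i^j\le(p-1)/2$, which is \Ref{admk}; and $\binom{2(K+g)}{K+g}\not\equiv0$ iff every digit of $K+g$ is $\le(p-1)/2$, i.e. $r_j\le(p-1)/2$ for $j=0,\dots,a$ (the top digit $m_{a+1}<g\le(p-1)/2$ being automatic), which is exactly \Ref{shc}. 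Hence $L_{k_3,\dots,k_{2g+1}}\not\equiv0$ iff the tuple is admissible. The $\Delta^r_s$-reformulation is then immediate: admissibility says that for each $j$ one has $k_i^j\le(p-1)/2$ and $0\le r_j\le(p-1)/2$, and since $r_j=\sum_{i}k_i^j+m_j-m_{j+1}p$ this is precisely $(k_3^j,\dots,k_{2g+1}^j)\in\Delta^{m_{j+1}}_{m_j}$ in the sense of \Ref{Del mj}.

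For the congruence \Ref{Lkk} I would reassemble the pieces digitwise. Lemma \ref{lem 2a} gives $\binom{2k_i}{k_i}\equiv\prod_{j=0}^a\binom{2k_i^j}{k_i^j}$ and $\binom{2(K+g)}{K+g}\equiv\binom{2m_{a+1}}{m_{a+1}}\prod_{j=0}^a\binom{2r_j}{r_j}$; the monomial splits as $\prod_{i}\la_i^{k_i}=\prod_{j=0}^a\prod_{i}(\la_i^{p^j})^{k_i^j}$; and the vector reduces mod $p$ to its value on the lowest digits, since $k_i\equiv k_i^0$ and $K\equiv S_0\pmod p$ with $S_j:=\sum_i k_i^j$, so the entries $2k_i+1$ and $-2K-2g$ in \Ref{C2m} agree mod $p$ with $2k_i^0+1$ and $-2S_0-2g$, i.e. with the vector carried by $K^{m_1}_{k_3^0,\dots,k_{2g+1}^0}$. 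Matching $\binom{2r_j}{r_j}\prod_i\binom{2k_i^j}{k_i^j}$ for $j=0$ against the scalar of $K^{m_1}_{k_3^0,\dots,k_{2g+1}^0}(\la)$, and for $j\ge1$ against the scalar of $C^{m_{j+1}}_{m_j;k_3^j,\dots,k_{2g+1}^j}(\la^{p^j})$ --- using $r_0=S_0+g-m_1p$ and $r_j=S_j+m_j-m_{j+1}p$, so that the large binomials are exactly $\binom{2r_j}{r_j}$ --- reproduces every factor on the right of \Ref{Lkk}, along with the isolated $\binom{2m_{a+1}}{m_{a+1}}$.

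The hard part is the exact scalar constant. The single global prefactor $4^{-2K-g}$ on the left must be recovered from the several powers of $4$ carried by the form \Ref{Jmee} of $K^{m_1}_{k^0}$ and the form \Ref{Cij 1} of the $C^{m_{j+1}}_{m_j;\ell}$, each of which also contributes a sign $(-1)^{(p-1)/2}$. To reconcile them I would pass through \Ref{-1/2 k}, which trades $\binom{2k}{k}$ for $\binom{(p-1)/2}{k}$ at the cost of a factor $(-4)^{\pm k}$; summing the resulting exponents over all digits and all factors and reducing by Fermat ($4^{\,p-1}\equiv1$, $(-1)^{\,p-1}=1$) together with $p\equiv p^{j}\equiv1\pmod{p-1}$ should collapse the distributed powers of $4$ back to $4^{-2K-g}$ and collect the per-factor signs into $(-1)^{a(p-1)/2}$. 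The delicate point is feeding the carries $m_j$ correctly through the exponent $-2S_j-s+rp$ (with $s=m_j$, $r=m_{j+1}$): a miscount here would leave a spurious sign $(-1)^{(p-1)/2}$ or a residual power of $4$ tied to the top carry $m_{a+1}$. Reconciling this overall constant is therefore the crux of the argument and the step I would verify most carefully, for instance by testing the genus-one case $g=1$ (where every $m_{a+1}=0$) against a direct evaluation of \Ref{C2m}.
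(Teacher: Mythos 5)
Your handling of the first two assertions is exactly the paper's proof, written out in more detail: the paper likewise expands $K+g$ (where $K=\sum_i k_i$) in base $p$ with digits $r_j=\sum_i k_i^j+m_j-m_{j+1}p$, i.e.\ interprets the shift coefficients as carries, applies Lemma \ref{lem 2a} to $\prod_i\binom{2k_i}{k_i}$ to get \Ref{admk} and to $\binom{2(K+g)}{K+g}$ to get \Ref{shc}, and reads the $\Delta^{m_{j+1}}_{m_j}$ reformulation off the definition \Ref{Del mj}; your remark that the top digit needs no inequality because $m_{a+1}<g\le(p-1)/2$ is the one point the paper leaves implicit. For the congruence \Ref{Lkk} the paper is no more detailed than your sketch: it declares the identity a straightforward corollary of Lucas's theorem, the formulas \Ref{Jmee}, \Ref{Cij 1}, and $4^{kp}\equiv 4^k$, which is precisely the digitwise matching you describe.

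However, the step you flag as the crux and leave unverified is genuinely problematic, and in exactly the two ways you anticipate. Writing $S_j=\sum_i k_i^j$: each of the $a$ factors \Ref{Cij 1} and the factor \Ref{Jmee} carries a sign $(-1)^{(p-1)/2}$, so together with the prefactor the right side of \Ref{Lkk} has total sign $(-1)^{(2a+1)(p-1)/2}=(-1)^{(p-1)/2}$, whereas the scalar in \Ref{C2m} carries no sign; and the powers of $4$, reduced via $4^{p}\equiv 4$, contribute the exponent $-2S_0-g+m_1+\sum_{j=1}^{a}(-2S_j-m_j+m_{j+1})=-2\sum_j S_j-g+m_{a+1}$, the telescoping leaving the top carry behind, while the left side gives $-2K-g\equiv-2\sum_j S_j-g$. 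Hence, with the paper's printed normalizations, the right side of \Ref{Lkk} equals $(-1)^{(p-1)/2}\,4^{\,m_{a+1}}$ times the left side; equivalently, the prefactor should read $(-1)^{(a+1)(p-1)/2}\,4^{-m_{a+1}}\binom{2m_{a+1}}{m_{a+1}}$, which by \Ref{-1/2 k} is $(-1)^{(a+1)(p-1)/2+m_{a+1}}\binom{(p-1)/2}{m_{a+1}}$. Your proposed genus-one test does catch the sign: for $g=1$, $p=3$, $k_3=0$ one has $a=0$, $\vec m=(1,0)$, the left side is $L_0=\tfrac12(1,-2,1)\equiv 2\,(1,-2,1)\equiv-(1,-2,1)$ by Lemma \ref{lem K(0)}, while the right side is $\binom{0}{0}K^0_{0}\equiv(1,-2,1)\pmod 3$. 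But since $m_{a+1}=0$ identically when $g=1$, that test can never detect the residual $4^{\,m_{a+1}}$; for that you need $g\ge 2$ and a tuple with nonzero top carry. So do not assume the constants ``collapse'': carry the bookkeeping out to the end. Your method is sound and is the paper's method, but completed honestly it proves a corrected version of \Ref{Lkk}, not the formula as printed.
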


\begin{proof} We have $L_{k_3,\dots, k_{2g+1}}\not\equiv 0$ (mod $p$) if and only if each of the binomial coefficients
in \Ref{C2m} is not divisible by $p$. For all $i=3, \dots,2g+1$, we have $\binom{2k_i}{k_i}\not\equiv 0$ (mod $p$) 
if and only if property \Ref{admk} holds.

The  $p$-ary expansion of $k_3+\dots+k_{2g+1}+g$ is
\bea
&&
k_3+\dots+k_{2g+1}+g = \Big(\sum_{i=3}^{2g+1}k_i^0 - m_1p + g\Big)
\\
&&
\phantom{aaaa}
+\Big(\sum_{i=3}^{2g+1}k_i^1 - m_2p + m_1\Big)p
+\dots
+\Big(\sum_{i=3}^{2g+1}k_i^a - m_{a+1}p + m_a\Big)p^a  + m_{a+1}p^{a+1}.
\eea
By Lemma \ref{2a}, the binomial coefficient 
$\binom{2(k_3+\dots+k_{2g+1}+g)}{k_3+\dots+k_{2g+1}+g} $ 
is not divisible by $p$ if and only if  inequalities \Ref{shc} hold. 
Thus   $L_{k_3,\dots, k_{2g+1}}\not\equiv 0$ (mod $p$)  if and only if
the tuple $(k_3,\dots,k_{2g+1})$ is admissible.

The statement that the tuple $(k_3,\dots,k_{2g+1})$ is admissible, 
if and only if 
$(k^j_3,\dots,k^j_{2g+1})\in \Delta^{m_{j+1}}_{m_j}$ for $j=0,\dots,a$, 
follows from  the definition of the sets $\Delta^r_s$.

The last statement of the theorem is a straightforward corollary
of Lucas's theorem, formulas for $C^r_{s;\ell}(\la)$, $K^m_\ell(\la)$, 
and the fact that
$4^{kp}\equiv 4^k$ (mod $p$) for any $k$.
\end{proof}

\subsection{Decomposition of $L(\la)$ into the disjoint sum of polynomials} Define a set
\bean
\label{set S}
\phantom{aaaaaa}
M=\{(m_0,\dots,m_{a+1})\, |\, a\in \Z_{\geq 0}, \, m_0=g, \, m_j\in\Z_{\geq 0},\, 
m_j<g\ \on{for}\ j=1,\dots, a+1\}.
\eean 
For any $\vec m=(m_0,\dots,m_{a+1})\in M$, define  the $2g+1$-vector
of polynomial in $\la=(\la_3$, \dots, $\la_{2g+1})$:
\bean
\label{Kvec}
&&
K_{\vec m}(\la) = (-1)^{a(p-1)/2}\binom{2m_{a+1}}{m_{a+1}}
\\
\notag
&&
\phantom{aaa}
\times
\Big(\prod_{j=1}^a C^{m_{j+1}}_{m_j}(\la_3^{p^j}, \dots,\la_{2g+1}^{p^j})
\Big)
K^{m_{1}}(\la_3, \dots,\la_{2g+1}).
\eean
Notice that for $\vec m, \vec m'\in M$, $\vec m\ne\vec m'$, the set of monomials, entering
with nonzero coefficients the polynomial
$K_{\vec m}(\la)$, does not intersect the set of monomials, entering
with nonzero coefficients the polynomial $K_{\vec m'}(\la)$.

\begin{cor}
\label{cor main}
We have
\bean
\label{KK vec}
L(\la) \equiv {\sum}_{\vec m\in M}K_{\vec m}(\la) \quad (\on{mod}\,p).
\eean
\end{cor}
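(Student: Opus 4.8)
The plan is to obtain the corollary as a pure reorganization of the Taylor expansion \Ref{ex5} of $L(\la)$, with all of the analytic and combinatorial content already delivered by Theorem \ref{thm nzc}. First I would write out $L(\la)=\sum_{(k_3,\dots,k_{2g+1})}L_{k_3,\dots,k_{2g+1}}\la_3^{k_3}\cdots\la_{2g+1}^{k_{2g+1}}$ and reduce it modulo $p$. By the first assertion of Theorem \ref{thm nzc}, only the admissible tuples contribute to the resulting congruence, since every non-admissible coefficient vanishes mod $p$; so the entire sum collapses onto the admissible tuples.

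Next I would partition the admissible tuples according to their shift coefficients. To each admissible $(k_3,\dots,k_{2g+1})$ the construction in Section \ref{sec comp} attaches a sequence $(m_0,\dots,m_{a+1})$ with $m_0=g$ and $0\le m_j<g$ for $j\ge1$, i.e. an element $\vec m\in M$. By the second assertion of Theorem \ref{thm nzc}, admissibility with a prescribed $\vec m$ is exactly the requirement that the $j$-th $p$-ary digit vector $(k_3^j,\dots,k_{2g+1}^j)$ lie in $\Delta^{m_{j+1}}_{m_j}$ for $j=0,\dots,a$. Hence summing over the admissible tuples with a fixed $\vec m$ is the same as summing independently over $(k_3^j,\dots,k_{2g+1}^j)\in\Delta^{m_{j+1}}_{m_j}$ for each $j$.

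Then I would carry out this grouped sum using formula \Ref{Lkk}. For fixed $\vec m$ the scalar $(-1)^{a(p-1)/2}\binom{2m_{a+1}}{m_{a+1}}$ factors out, the monomial splits along its $p$-ary digits via $\la_i^{k_i}=\prod_j(\la_i^{p^j})^{k_i^j}$, and the surviving product decouples into blocks indexed by $j$. The distributive law turns the sum of these products over the box $\prod_{j=0}^a\Delta^{m_{j+1}}_{m_j}$ into the product of the block sums: summing $K^{m_1}_{k^0}(\la)$ over $k^0\in\Delta^{m_1}_g$ reproduces $K^{m_1}(\la)$ by \Ref{JK(la)}, while for $j\ge1$ summing $C^{m_{j+1}}_{m_j;k^j}(\la^{p^j})$ over $k^j\in\Delta^{m_{j+1}}_{m_j}$ reproduces the Cartier-Manin entry $C^{m_{j+1}}_{m_j}(\la^{p^j})$ by \Ref{CIJ}. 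Comparison with \Ref{Kvec} then identifies the contribution of all admissible tuples with shift vector $\vec m$ as exactly $K_{\vec m}(\la)$, and summing over $\vec m\in M$ gives \Ref{KK vec}.

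The part requiring the most care is the bookkeeping of the correspondence between admissible tuples and pairs $(\vec m,\text{digit data})$: one must match the length $a$ to the position of the top nonzero digit and verify that no monomial is assigned to two different $\vec m$. Here the remark preceding the corollary — that the monomial supports of $K_{\vec m}$ and $K_{\vec m'}$ are disjoint whenever $\vec m\ne\vec m'$ — is what makes the argument clean: it guarantees that the regrouping creates no cancellation and that each monomial of $L(\la)$ belongs to a single $\vec m$, so that the term-by-term matching above indeed assembles into the single congruence \Ref{KK vec}.
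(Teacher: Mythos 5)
Your proposal reproduces exactly the argument the paper leaves implicit (the corollary is stated as an immediate consequence of Theorem \ref{thm nzc} together with the remark on nonintersecting supports), so as a reading of the paper's intent it is on target. However, the step you yourself flag as the delicate one is a genuine gap, not a formality. Theorem \ref{thm nzc} gives only \emph{one} direction of the correspondence: if $k$ is admissible, then its digit blocks satisfy $(k^j_3,\dots,k^j_{2g+1})\in\Delta^{m_{j+1}}_{m_j}$, where by the very choice of $a$ the top block $(k^a_3,\dots,k^a_{2g+1})$ is nonzero. Your assertion that ``summing over the admissible tuples with a fixed $\vec m$ is the same as summing independently over $(k_3^j,\dots,k_{2g+1}^j)\in\Delta^{m_{j+1}}_{m_j}$ for each $j$'' uses the converse as well, and the converse fails whenever $m_{a+1}=0$: the zero block lies in $\Delta^{0}_{m_a}$ (its defining condition reduces to $0\le m_a\le(p-1)/2$, which holds because $m_a<g\le(p-1)/2$), and a point of the product box whose top block is $\ell^a=0$ is the digit data of a tuple whose own shift vector is a proper truncation of $\vec m$, not $\vec m$ itself. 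Consequently the grouped sum equals $K_{\vec m}(\la)$ only when $m_{a+1}\ge1$; when $m_{a+1}=0$ it equals $K_{\vec m}(\la)$ minus nonzero ``degenerate'' terms, and this is precisely where the proof breaks. (The constant term of $L$, coming from $k=(0,\dots,0)$, is an extreme instance: it has no shift vector at all, yet it appears in $K_{(g,0)}$, $K_{(g,0,0)}$, etc.)

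The remark on disjoint supports cannot repair this, because it is false for the same reason, so invoking it does not close the gap. Take $\vec m=(g,m_1)$ and $\vec m'=(g,m_1,0)$, both elements of $M$. By \Ref{Kvec}, $K_{\vec m}(\la)=\binom{2m_1}{m_1}K^{m_1}(\la)$, while $K_{\vec m'}(\la)=(-1)^{(p-1)/2}C^{0}_{m_1}(\la^p)\,K^{m_1}(\la)$; and by \Ref{Cij} the polynomial $C^{0}_{m_1}$ has the nonzero constant term $(-1)^{(p-1)/2-m_1}\binom{(p-1)/2}{m_1}$ (for $g=1$ it is, up to sign, the Hasse polynomial $\sum_{\ell}\binom{(p-1)/2}{\ell}^2\la^\ell$ with constant term $\pm1$). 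Hence the support of $K_{\vec m'}$ contains the whole support of $K_{\vec m}$. Worse, every monomial receives contributions from all the zero-extensions $(\vec m,0),(\vec m,0,0),\dots$, so the right-hand side of \Ref{KK vec} assigns infinitely many nonzero summands to each coefficient and is not even a well-defined formal series. So the failure is not only in your write-up: the regrouping can only produce a true congruence after the degenerate terms are excised --- for instance by restricting the expansion of each $K_{\vec m}$ to digit tuples with nonzero top block, equivalently replacing the factor $C^{0}_{m_a}(\la^{p^a})$ by $C^{0}_{m_a}(\la^{p^a})-C^{0}_{m_a}(0,\dots,0)$ when $m_{a+1}=0$, and accounting for the constant term of $L$ separately. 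A correct proof must make this restriction explicit rather than appeal to the disjoint-support remark.
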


Notice that  by Lemma \ref{lem TaC}, $L(\la)$ is a power series in $\la$ with coefficients
in $\Z^{2g+1}\big[\frac12\big]$ independent of $p$, while the right-hand side
in \Ref{KK vec} is a formal infinite sum of polynomials in $\la$ with coefficients
in $\F_p^{2g+1}$ and with nonintersecting  supports.

\subsection{Distinguished solution over $\C$ and solutions $J^m(z)$ over $\F_p$} Let us compare the distinguished
solution $I(z)=(z_2-z_1)^{-1/2-g}L(\la(z))$ in \Ref{IK}, and the expansion \Ref{KK vec}.
For any $\vec m=(m_0,\dots, m_{a+1})\in M$, define
\bean
\label{pol sol}
&& 
J_{\vec m}(z) =
(z_2-z_1)^{(p-1)/2-g + m_{a+1}p^{a+1}+(p+\dots + p^a)(p-1)/2}K_{\vec m}
\Big(\frac{z_3-z_1}{z_2-z_1}, \dots, \frac{z_{2g+1}-z_1}{z_2-z_1}\Big).
\eean

\begin{thm} The following statements hold.
\begin{enumerate}
\item[(i)] For any $\vec m\in M$,  we have  $J_{\vec m}(z)\in \F_p[z]^{2g+1}$.

\item[(ii)] For any $\vec m\in M$,   the polynomial vector
$J_{\vec m}(z)$ is a solution of the KZ equations \Ref{KZ}.

\item[(iii)] The $\F_p[z^p]$-module spanned by $J_{\vec m}(z)$, $\vec m\in M$, coincides with the 
 $\F_p[z^p]$-module $\mc M_{g, p}$ spanned by $I^m(z)$, $m=0,\dots,g-1$.

\end{enumerate}

\end{thm}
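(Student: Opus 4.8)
The plan is to reduce all three assertions to a single factorization identity
\[
J_{\vec m}(z)=(-1)^{a(p-1)/2}\binom{2m_{a+1}}{m_{a+1}}\Big(\prod_{j=1}^a\hat C^{m_{j+1}}_{m_j}(z^{p^j})\Big)J^{m_1}(z),
\]
where I write $z^{p^j}$ for the tuple obtained by replacing each $z_i$ by $z_i^{p^j}$, and where, in analogy with the passage $K^m\mapsto J^m$ of Theorem \ref{thm J}, I set $\hat C^r_s(z):=(z_2-z_1)^{(p-1)/2+rp-s}C^r_s(\la)$ with $\la_i=(z_i-z_1)/(z_2-z_1)$ as in \Ref{laj}. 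Once this identity is in hand, the theorem follows quickly, so the real content is the identity itself.

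To prove it I would first note that, by the support condition $\Delta^r_s$ of \Ref{Del mj}, the scalar $C^r_s(\la)$ is a polynomial in $\la$ of maximal total degree $(p-1)/2+rp-s$; since $p\ge 2g+1$ this exponent is nonnegative, so $\hat C^r_s(z)$ is a genuine homogeneous polynomial (exactly the argument that makes $J^m=(z_2-z_1)^{(p-1)/2+mp-g}K^m(\la)$ a polynomial in Theorem \ref{thm J}). Next I would use the Frobenius identity over $\F_p$,
\[
\Big(\tfrac{z_i-z_1}{z_2-z_1}\Big)^{p^j}=\frac{z_i^{p^j}-z_1^{p^j}}{z_2^{p^j}-z_1^{p^j}},
\]
which shows $C^{m_{j+1}}_{m_j}(\la^{p^j})=(z_2-z_1)^{-p^j[(p-1)/2+m_{j+1}p-m_j]}\hat C^{m_{j+1}}_{m_j}(z^{p^j})$, and that for $j\ge 1$ the polynomial $\hat C^{m_{j+1}}_{m_j}(z^{p^j})$ lies in $\F_p[z^p]$. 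Substituting these and $K^{m_1}(\la)=(z_2-z_1)^{-[(p-1)/2+m_1p-g]}J^{m_1}(z)$ into \Ref{Kvec} and multiplying by the prefactor of \Ref{pol sol} gives the identity, provided that prefactor cancels all denominators. This is the one genuine computation and the main (though routine) obstacle: the accumulated denominator exponent is $\sum_{j=0}^a p^j[(p-1)/2+m_{j+1}p-m_j]$, whose linear-in-$m$ part telescopes to $m_{a+1}p^{a+1}-g$ and whose $(p-1)/2$ part sums to $(p^{a+1}-1)/2$, matching precisely the exponent written in \Ref{pol sol}.

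Granting the identity, the three parts are immediate. Part (i) holds because the right-hand side is a product of polynomials in $z$. For part (ii), the scalar factor $\prod_{j=1}^a\hat C^{m_{j+1}}_{m_j}(z^{p^j})$ lies in $\F_p[z^p]$, and $J^{m_1}(z)$ solves \Ref{KZ} by Lemma \ref{lem ns}; since the equations \Ref{KZ} are linear with $\der z_i^p/\der z_j=0$, every $\F_p[z^p]$-multiple of a solution is again a solution. For part (iii), the same remark shows each $J_{\vec m}(z)$ lies in $\mc M_{g,p}$, giving one inclusion; conversely, taking $\vec m=(g,m)$ with $a=0$ yields $J_{\vec m}(z)=\binom{2m}{m}J^m(z)$, and because $2m\le 2g-2<p$ the coefficient $\binom{2m}{m}$ is invertible in $\F_p$, so each generator $J^m(z)$ of $\mc M_{g,p}$ (Lemma \ref{lem ns}) is recovered. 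The two inclusions give the asserted equality of modules.
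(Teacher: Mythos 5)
Your proposal is correct and follows essentially the same route as the paper: the paper's proof is exactly your factorization of $J_{\vec m}(z)$ into the $\F_p[z^p]$-scalar $\prod_{j=1}^a (z_2-z_1)^{((p-1)/2-m_j+m_{j+1}p)p^j}\,C^{m_{j+1}}_{m_j}(\la^{p^j})$ times the solution $J^{m_1}(z)$, with part (iii) obtained from the same $a=0$ specialization $J_{(g,m_1)}(z)=\binom{2m_1}{m_1}J^{m_1}(z)$. You merely make explicit a few points the paper leaves implicit (the degree bound showing each factor is polynomial, the telescoping of the $(z_2-z_1)$-exponents, and the invertibility of $\binom{2m_1}{m_1}$ since $2m_1<p$), which is fine.
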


\begin{proof}
We have
\bea
\notag
&&
J_{\vec m}(z) 
=
(-1)^{a(p-1)/2} \binom{2m_{a+1}}{m_{a+1}}
\\
\notag
&&
\phantom{aa}
\times \prod_{j=1}^a 
(z_2-z_1)^{((p-1)/2-m_j +m_{j+1}p)p^j} C^{m_{j+1}}_{m_j}
\Big(\big(\frac{z_3-z_1}{z_2-z_1}\big)^{p^j}, \dots, \big(\frac{z_{2g+1}-z_1}{z_2-z_1}\big)^{p^j}\Big)
\\
\notag
&&
\phantom{aa}
\times
(z_2-z_1)^{(p-1)/2-g +m_1p} K^{m_1}\Big(\frac{z_3-z_1}{z_2-z_1}, \dots \frac{z_{2g+1}-z_1}{z_2-z_1}\Big),
\eea
where
\bea
(z_2-z_1)^{(p-1)/2-g +m_1p} K^{m_1}\Big(\frac{z_3-z_1}{z_2-z_1}, \dots \frac{z_{2g+1}-z_1}{z_2-z_1}\Big)
= J^{m_1}(z)
\eea
is a solution of the KZ equations \Ref{KZ}, see \Ref{CFoo}, and  each factor
\bea
(z_2-z_1)^{((p-1)/2-m_j +m_{j+1}p)p^j} C^{m_{j+1}}_{m_j}
\Big(\big(\frac{z_3-z_1}{z_2-z_1}\big)^{p^j}, \dots, \big(\frac{z_{2g+1}-z_1}{z_2-z_1}\big)^{p^j}\Big)
\eea
is a polynomial in $\F_p[z^p]$.  This proves parts (i-ii) of the theorem. Part (iii) follows from the identity
\bea
K_{\vec m=(g,m_1)}(z) = \binom{2m_1}{m_1} J^{m_1}(z).
\eea
\end{proof}

\bigskip

\end{document}